\newtheorem{thm}{Theorem}
\newtheorem{example}{\it Example}
\newtheorem{efinition}{efinition}[section]
\newtheorem{remark}[efinition]{Remark}
\title{On polynomial solvability of the Hamiltonian cycle problem
for graphs of degree less than or equal to 3}
\author{Ivan I. Goray\thanks{I.I.G., Inc., Discrete Mathematics
Division, 8 Obruchevyh str. Ap. 190, Saint Petersburg, 194064,
Russian Federation ({\tt lig@pcgrate.com}).}}
\begin{document}
\maketitle
\begin{abstract}
Any graph can be represented pictorially as a figure. Moreover, it
can be represented as two or more figures that can be have
different properties to each other. For the purpose of HCP, we
represent a graph by two such figures. In each of them, there is
an exterior part called the {\em contour}, and an interior part.
These two figures differ from each other by the constitution of
the edges in the interior part. That is, any edges in the interior
part for one figure are not in the interior for the other figure.
We call these two figures {\em basic objects}.

We develop rules and algorithms that allow us to represent any
graph of degree $d \leq 3$ by two basic objects. Individually,
neither of these representations possess the features to easily
determine the Hamiltonicity of the graph. However, the combination
of these two figures, once certain weights are assigned to their
edges, allows us to determine the Hamiltonicity with a
polynomial-time check.

The rules for the assignment of weights are:
\begin{enumerate}
\item The weight of any edge of the interior part is 0, for both
objects. \item In both figures any common edge of the contour has
the same weight.
\end{enumerate}

The weights of the edges allow us to extend the number of
parameters of the objects, that is sufficient to determine the
Hamiltonicity of the graph. Then, if the graph is Hamiltonian,
then both figures possess the same set of parameters. If the sets
of parameters for two figures are different, then the graph is not
Hamiltonian. The parameters that determine the Hamiltonicity of
the graph are the sums of weights of edges and {\em windows} of
contours in the figure. The algorithms of their construction do
not contain a combinatorial number of elements and have polynomial
complexity. We also supply an estimate of the complexity of each
algorithm.
\end{abstract}

\begin{keywords}{P vs NP, HCP, Hamiltonicity of graphs, graphs of
degree less than or equal to 3, equivalent graphs, graph
representation by basic objects, contour and interior parts of
graphs, assignment weights to edges and nodes, Hamiltonian cycle
length}
\end{keywords}

\begin{AMS}
68Q25, 68R10, 03D15
\end{AMS}

\pagestyle{myheadings} \thispagestyle{plain} \markboth{I.
GORAY}{ON POLYNOMIAL SOLVABILITY...}

\section{Introduction}\label{sec1}
To date, a solution of the general problem P $\neq$ NP has not be
found. If this problem can be solved then it will be most likely
on the basis of some common features of the classes P and NP.
Looking for a solution of any particular P vs NP problem seems to
lack perspective. If it were possible to obtain, for a particular
NP problem, that we cannot solve it in polynomial time, it would
be necessary to supply the complete set of algorithms used in the
solution of this problem, which is most likely impossible.
Thinking deeper about the NP phenomenon and about the possibility
of solving such problems leads to some questions: Which
algorithms, in principle, could be constructed to solve particular
NP problems ? Can we avoid combinatorial enumeration in such
algorithms ? Can we overcome NP-completeness with the help of such
algorithms ? Garey and Johnson \cite{1} say that polynomial
algorithms can be constructed only when it is possible to
penetrate deep into the essence of the problem in question. The
success in solving particular NP problems using optimization
methods is very high (e.g. \cite{2,3,4,5}). Most likely, here is
the key of the difficulty of finding the precise solution, because
one can not substitute the essence of the problem by advances in
modern optimization technologies. It is commonly accepted
\cite{6,7,8} that the branch and bound method in all NP problems
is the most universal and effective. It is possible that branch
and bound methods mostly captures the difficulty of the problem as
it is based in investigation and transformation of the network.

The solution of the HCP that we offer here is not based on
optimization. It does not contain a progression to a better
solution. This solution is based on the analysis of structural
differences between two objects constructed on the basis of the
same non-oriented graph with degree less than equal to 3.

\section{Objects}\label{sec2}
\subsection{Basic object}
\subsubsection{Properties of basic object}

\begin{figure}
\centering
\includegraphics[width=0.3\textwidth]{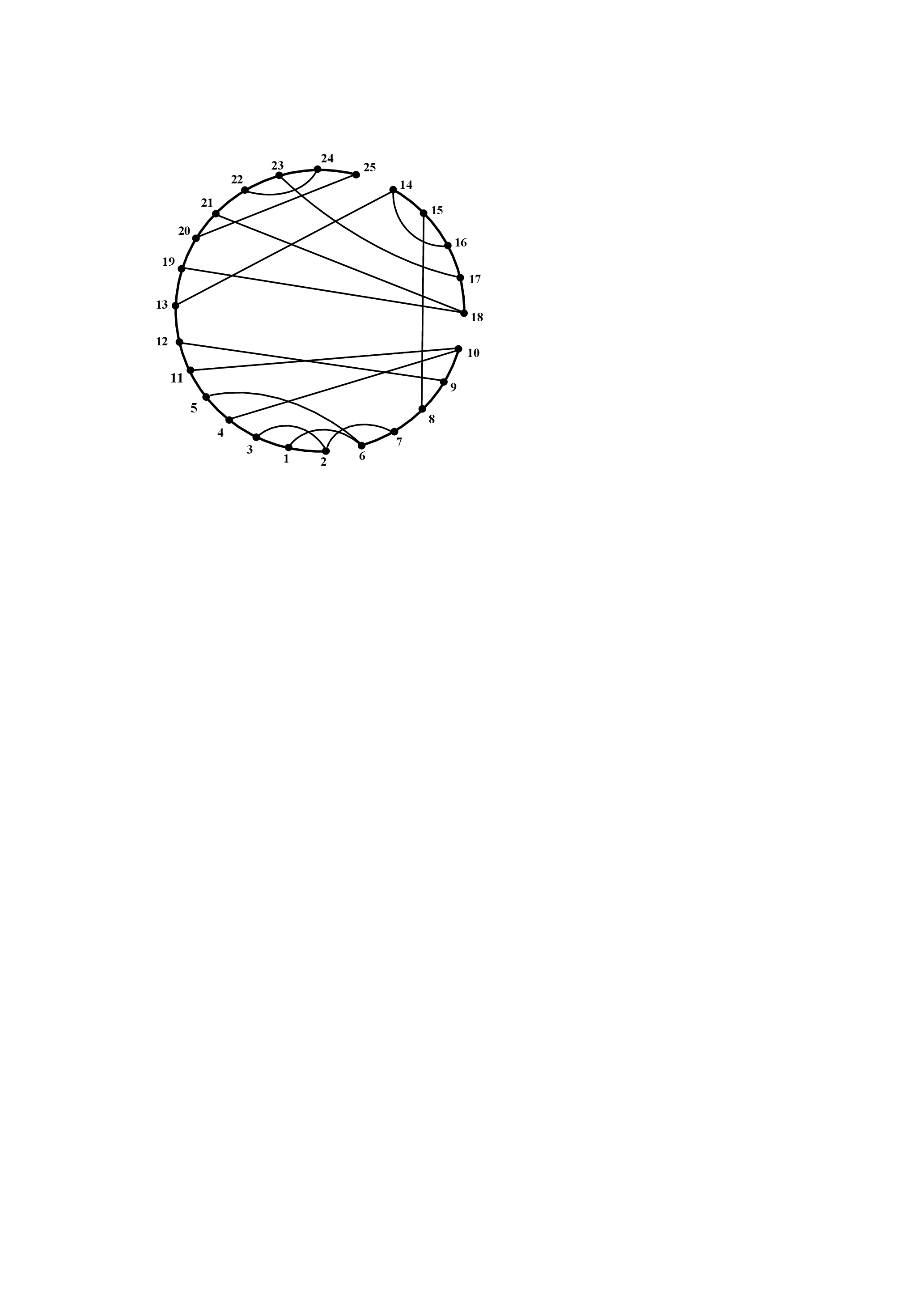} 
\caption{An example of a first basic object and its properties.
Nodes forming windows are 2, 6, 10, 18, 14, 25. Windows are
$w(2-6)$, $w(10-18)$, $w(14-25)$. Free edges are $e(22-24)$,
$e(9-12)$, $e(8-15)$, $e(17-23)$. Segments are $S_{2,25}$,
$S_{14,18}$, $S_{6,10}$.\label{fig2-1}}
\end{figure}
Given a graph $\Gamma$ of size $N$, we define object $G_N$ as a
figure representing $\Gamma$ that, by some property or properties,
differs from other figures representing the same graph $\Gamma$.
We can solve the HCP on a set of objects, and construct a
parameter for each object such that the parameters should not
coincide unless the graph is Hamiltonian. While, individually,
these objects do not solve HCP, the solution of the problem is
obtained on a finite set of the objects, by considering their
special properties. In fact, for any two such figures, we will
find a parameter that will distinguish between them.

For certain special cases, it is only possible to construct a
single, unique figure. In this case, the solution of the HCP is
given trivially by construction of this figure. However, in
general, for the solution of HCP, one has to construct two objects
that differ in the constitution of edges in different parts of the
objects, and in the method of construction of those parts. As an
example, in Figure 2.1, we display the first (basic) object
constructed from a 25-node graph.
\begin{example}
For convenience, we will distinguish between the exterior part of
the figure, that we call the contour, and the interior part. The
contour contains the edges of the graph and so-called {\em
windows}. A window is an edge that does not belong to the graph.
In the example in Figure 2.1, there are three windows: $w(2-6)$,
$w(10-18)$, and $w(14-25)$, that are formed by the nodes 2 and 6,
10 and 15, and 14 and 25 of the graph $\Gamma$.

The edges of the interior part of the object, that are not
connected at either end to the nodes that form the windows (for
convenience, we refer to these nodes as {\em window nodes}), are
called {\em free} edges. In the example given in Figure 2.1, the
free edges are $e(22-24)$, $e(17-23)$, $e(8-15)$, $e(9-12)$.

As follows from the example, the basic object has the following
properties.
\begin{enumerate}
\item Window nodes are not directly connected by the edges of the
interior part. We refer to such edges as {\em links}. \item The
basic object contains a {\em maximal} number of windows (three in
this example). That is, no further windows can be inserted without
violating the previous property. \end{enumerate}

The contour is partitioned by windows into several parts, and can
be determined and enumerated. The parts of the object separated by
windows will be called {\em segments}. In the example in Figure
2.1, there are three segments, between nodes 2 and 25, 14 and 18,
and 6 and 10, which we denote by $(S_{2,25}$, $S_{14,18}$,
$S_{6,10})$. A segment is called {\em degenerate} if it consists
of a single node such that has a window on both sides in the
contour.
\end{example}

The basic objects that are constructed for the HCP should satisfy
the following properties.
\begin{enumerate}
\item[(1)] The number of windows in the object should be maximal.
As we show later, we will use a simple algorithm to increase the
number of windows, and we will determine the criterion that
signals the end of the algorithm. \item[(2)] Window nodes should
not be connected to other window nodes by the edges of the
interior part. \item[(3)] The object should not contain degenerate
segments if the degree of the graph is $d = 3$. Degenerate
segments can only exist for nodes of degree $d = 2$. \item[(4)]
For a given graph, the set of edges in the interior part should be
different for any pair of basic objects.
\end{enumerate}

{\bf Lemma 1.} For a degenerate segment-free graph $\Gamma$ of
size $N$, and degree $d \leq 3$, the number of segments in all
basic objects is less than or equal to $\nu = [N/6]$.
\begin{proof}First, we observe that, by property 1, the nodes that
form the windows are not interconnected. Therefore, every node
that forms a window is connected by the edges of the interior part
with another two nodes that do not form windows. From the fragment
of the graph shown in Figure 2.2, we see that window $w(r-j)$,
together with nodes $a, b, c$ and $e$ constitutes a set of six
nodes in the graph. Since in the absence of degenerate segments
this is true for every window, then there can not be more than
$\nu = [N/6]$ segments.\end{proof}
\begin{figure}
\centering
\includegraphics[width=0.25\textwidth]{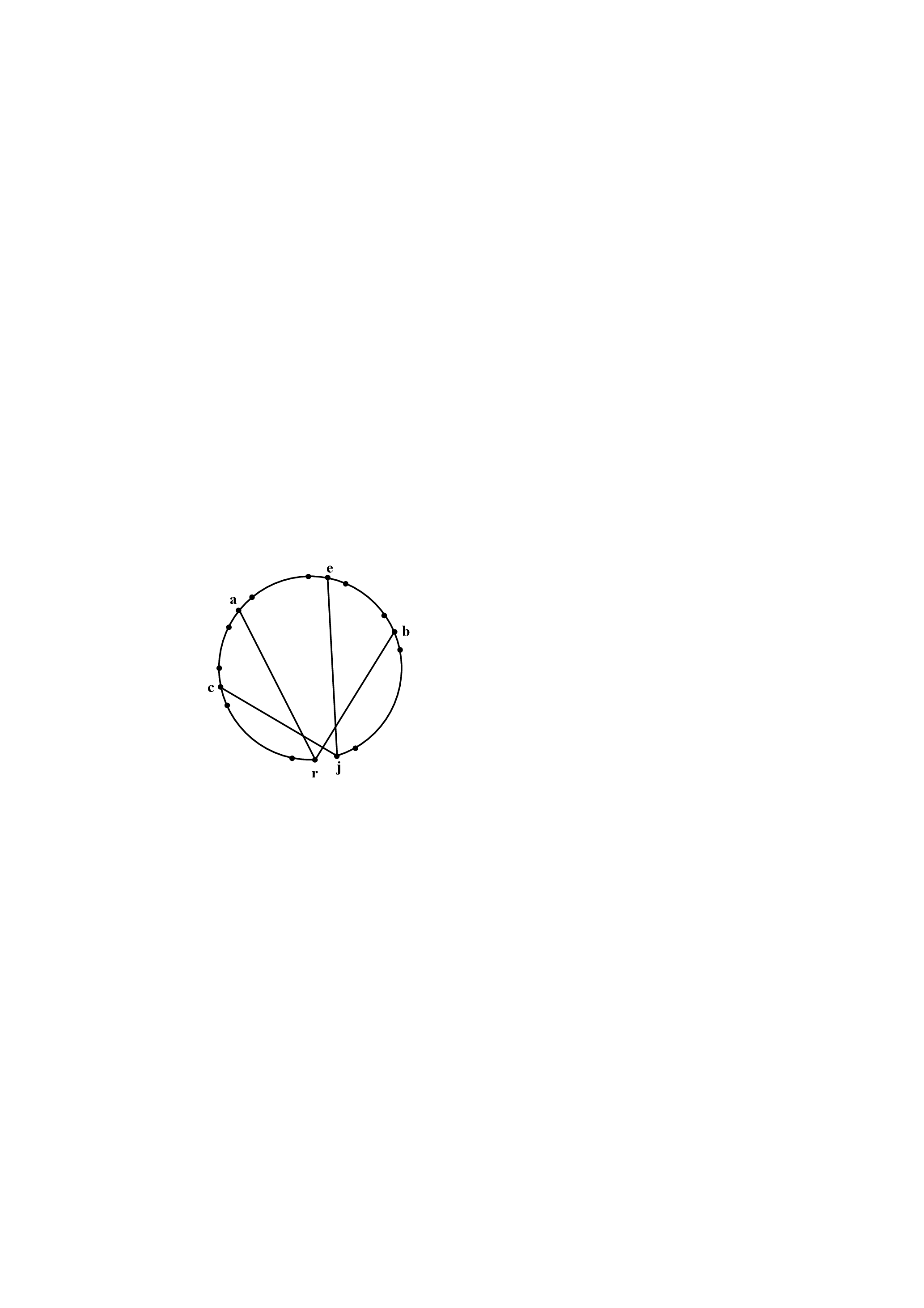}
\caption{The nodes $a$, $b$, $c$, $e$, $r$ and $j$ that correspond
to a window $w(r-j)$, and its incident nodes. None of these nodes
can be incident to any other window.\label{fig2-2}}
\end{figure}

\begin{remark}
\begin{enumerate}
\item The number of windows can be larger if there are degenerate
segments. \item If there are free edges, then the number of free
edges is less than $\nu$. \item If the degree of the graph was not
bounded by 3 then the number of windows in the basic object
constructed for the graph would be strictly less than $\nu$. This
is because the nodes that form the windows for $d > 3$ will be
connected to a larger number of nodes in the graph that, in view
of property 2, can not form windows
themselves.\end{enumerate}
\end{remark}

\subsubsection{Construction of the basic object}

\begin{figure}
\centering
\includegraphics[width=0.5\textwidth]{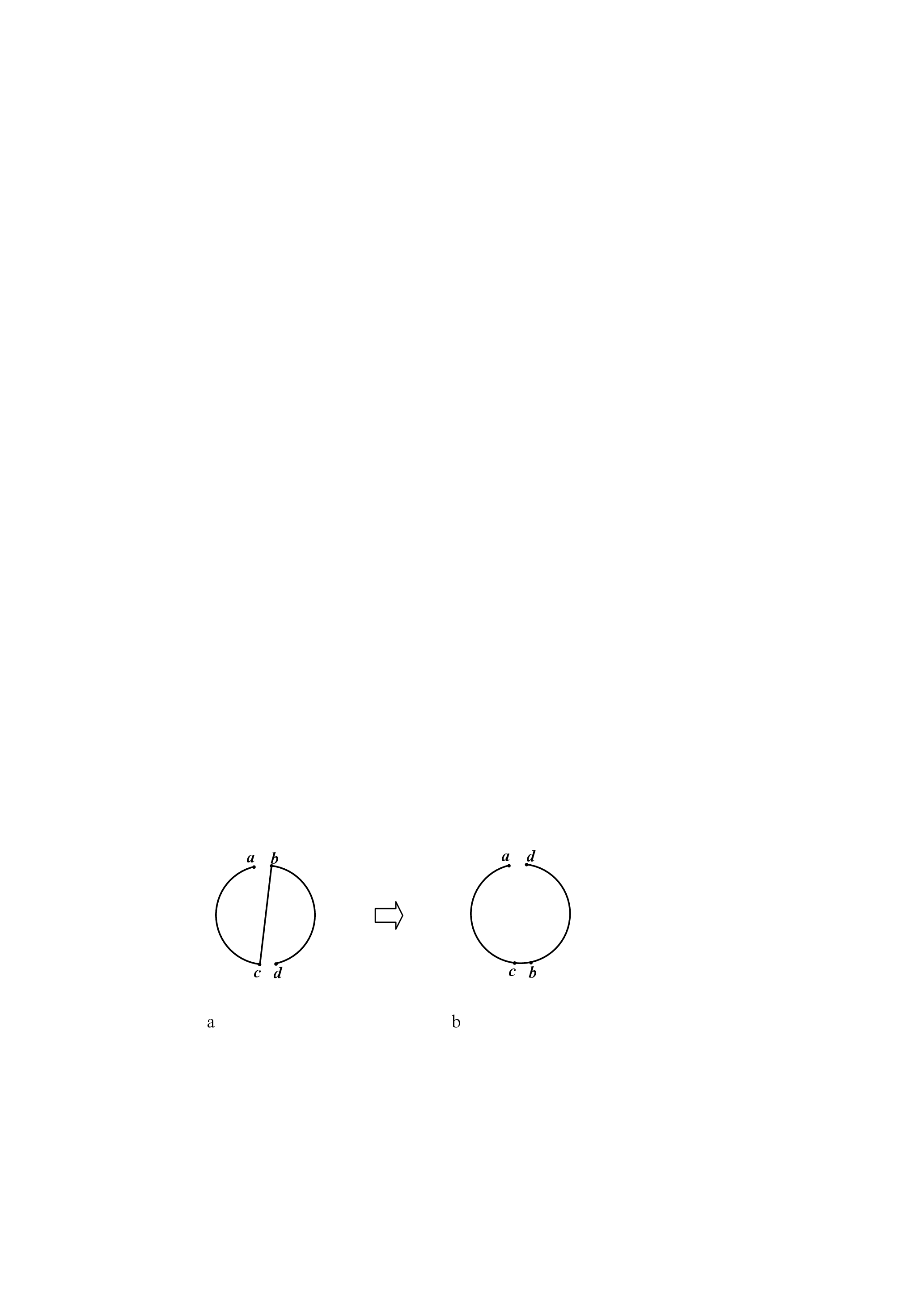}
\caption{A transformation that eliminates a link $e(b-c)$ between
windows $w(a-b)$ and $w(c-d)$ (in Figure 2.3a). Edge $e(b-c)$ is
moved to the contour, and windows $w(a-b)$ and $w(c-d)$  are
substituted by a single window, $w(a-d)$ (in Figure
2.3b).\label{fig3ab}}
\end{figure}

Consider the problem of constructing a basic object that satisfies
properties (1)--(4) above. This can be performed in two steps. In
the first step, we can use elementary procedures that are from
known optimization algorithms \cite{9}, that are described below.

{\em Step 1: Construction of a graph whose windows are not
connected by the edges of the interior part.}

It is known that optimization algorithms ensure an optimal
solution is found by gradually eliminating links between the
windows by means of elementary network transformations. For this
reason, they can be used to construct a basic object such that
nodes that form the windows are not interconnected. It is useful
to note that for the first step it is sufficient to use the
procedure of checking and substitution of no more than two edges
on the contour.
\begin{figure}
\centering
\includegraphics[width=0.5\textwidth]{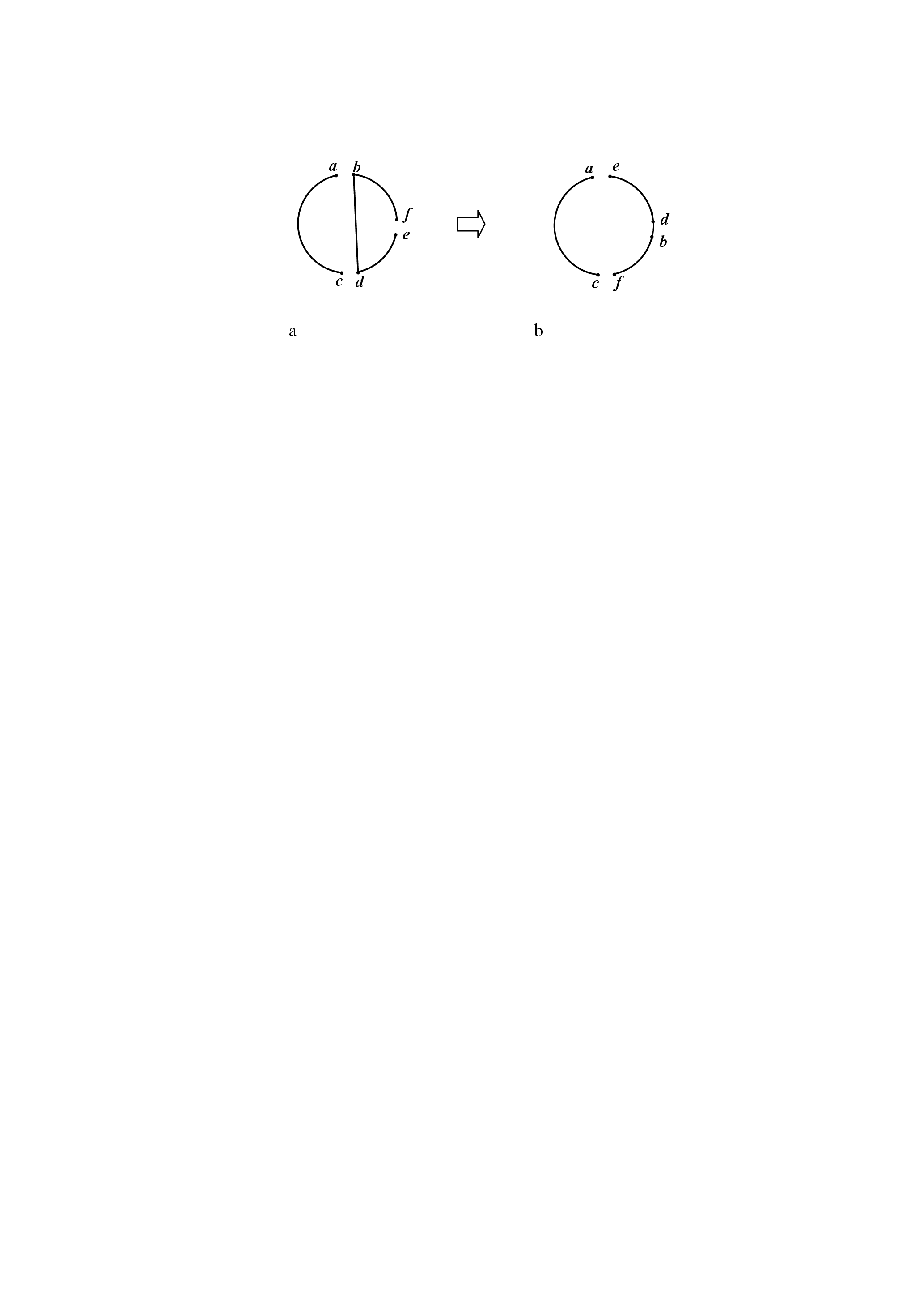}
\caption{A transformation that eliminates a link $e(b-d)$ between
windows $w(a-b)$ and $w(c-d)$ (in Figure 2.4a). Edge $e(b-d)$ is
moved to the contour, and windows $w(a-b)$, $w(c-d)$, $w(e-f)$ are
substituted by two windows, $w(a-e)$ and $w(c-f)$ (in Figure
2.4b).\label{fig4ab}}
\end{figure}

There are four possible cases of elimination of links:

{\bf Case 1.1}: We use the transformation that substitutes two
windows $w(a-b)$, $w(c-d)$ (see Figure 2.3a), with a single window
$w(a-d)$ and edge $e(b-c)$ (see Figure 2.3b). In this case, the
interior edge between nodes $b$ and $c$ is moved to the contour.

{\bf Case 1.2}: We use the transformation that substitutes three
windows $w(a-b)$, $w(c-d)$, $w(e-f)$ (see Figure 2.4a), with two
windows $w(a-e)$, $w(c-f)$, and the interior edge $e(b-d)$ is
moved to the contour (see Figure 2.4b).

{\bf Case 1.3}: Suppose the contour contains a segment $S_{b,f}$,
and its endpoints are connected with an interior edge $e(b-f)$.
Suppose that $S_{b,f}$ contains a node $l$ that is a neighbour to
node $m$ on the contour, and that is linked by an interior edge
$e(l-c)$ with another node that forms a window (see Figure 2.5a).
In this case, we use the transformation that substitutes three
windows $w(a-b)$, $w(c-d)$, $w(e-f)$ with two windows $w(a-e)$,
$w(m-d)$, and moves two interior edges $e(b-f)$ and $e(l-c)$ to
the contour (see Figure 2.5b). The edge $e(l-m)$ is moved to the
interior.
\begin{figure}
\centering
\includegraphics[width=0.5\textwidth]{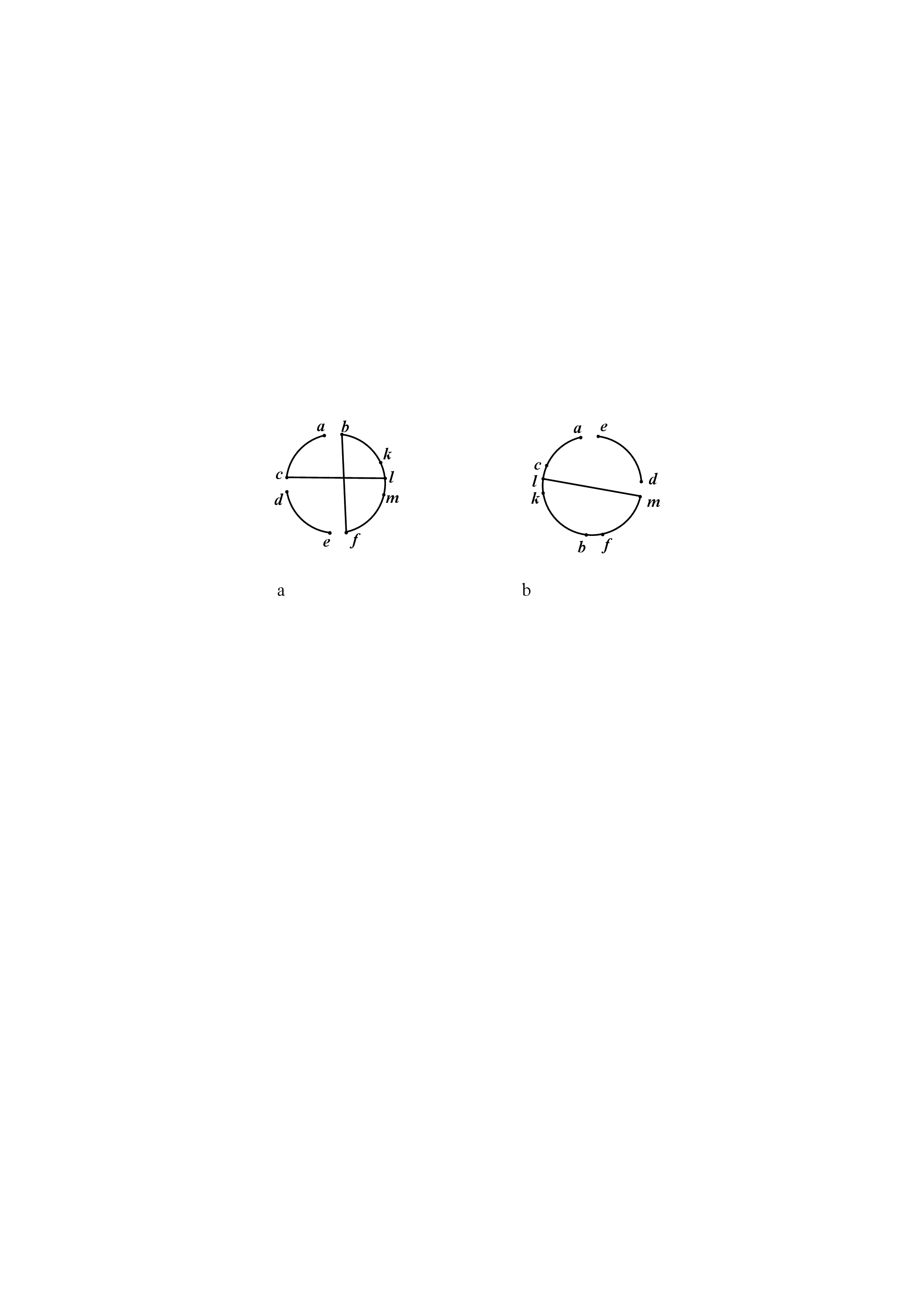}
\caption{A transformation that eliminates a link $e(b-f)$ between
windows $w(a-b)$ and $w(e-f)$ (in Figure 2.5a). Edges $e(b-f)$ and
$e(c-l)$ are moved to the contour, and windows $w(a-b)$, $w(c-d)$
and $w(e-f)$ are substituted by two windows, $w(a-e)$ and $w(d-m)$
(in Figure 2-5b).\label{fig2-5}}
\end{figure}

{\bf Case 1.4}: This case differs from case 3 in that $S_{b,f}$
does not contain any nodes that are connected with a node that
forms a window. In this case, we select a node $l$ on $S_{b,f}$
for which an interior arc $e(l-s)$ goes to a different segment
that contains a node $t$ that is a neighbour to node $s$ on the
contour (see Figure 2.6a). Then, we use the transformation that
moves edges $e(b-f)$, $e(s-l)$ to the contour, and substitutes
windows $w(a-b)$ and $w(e-f)$ with two windows $w(a-t)$, $w(e-n)$
(see Figure 2.6b).
\begin{figure}
\centering
\includegraphics[width=0.5\textwidth]{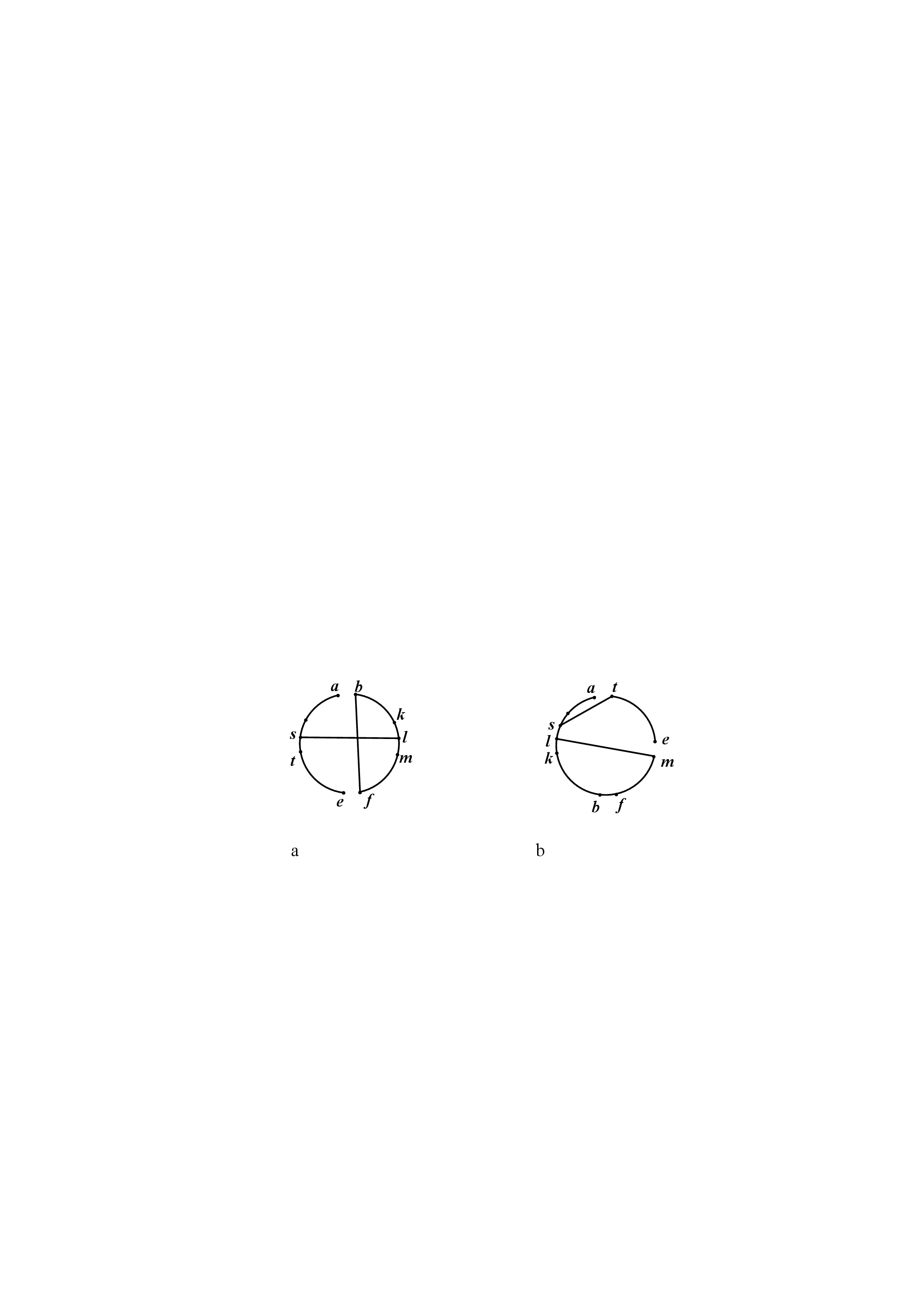}
\caption{A transformation that eliminates a link $e(b-f)$ between
windows $w(a-b)$ and $w(e-f)$ (in Figure 2.6a). Edges $e(b-f)$ and
$e(l-s)$ are moved to the contour, and windows $w(a-b)$ and
$w(e-f)$ are substituted by two windows, $w(a-t)$ and $w(e-m)$ (in
Figure 2.6b).\label{fig2-6}}
\end{figure}

In all of these cases, an interior edge that joins two window
nodes is eliminated from the set of interior edges. The object
that we construct after Step 1 can contain an arbitrary number of
windows, but cannot contain degenerate segments formed by nodes of
degree $d = 3$. We can achieve this by elimination of degenerate
segments from the object as described as follows.

{\bf Case 1.5}: Suppose that a degenerate segment contains node
$a$ that form two windows $w(a-b)$, $w(a-c)$ (see Figure 2.7a).
Suppose that node $a$ has degree $d = 3$, and is connected to
nodes $k$, $l$ and $n$, none of which form windows. To eliminate
$S_{a,a}$, we use the transformation that moves two interior edges
$e(a-l)$ and $e(a-m)$ to the contour, and substitutes windows
$w(a-b)$ and $w(a-c)$ with two windows $w(s-b)$, $w(t-c)$ (see
Figure 2.7b).
\begin{figure}
\centering
\includegraphics[width=0.5\textwidth]{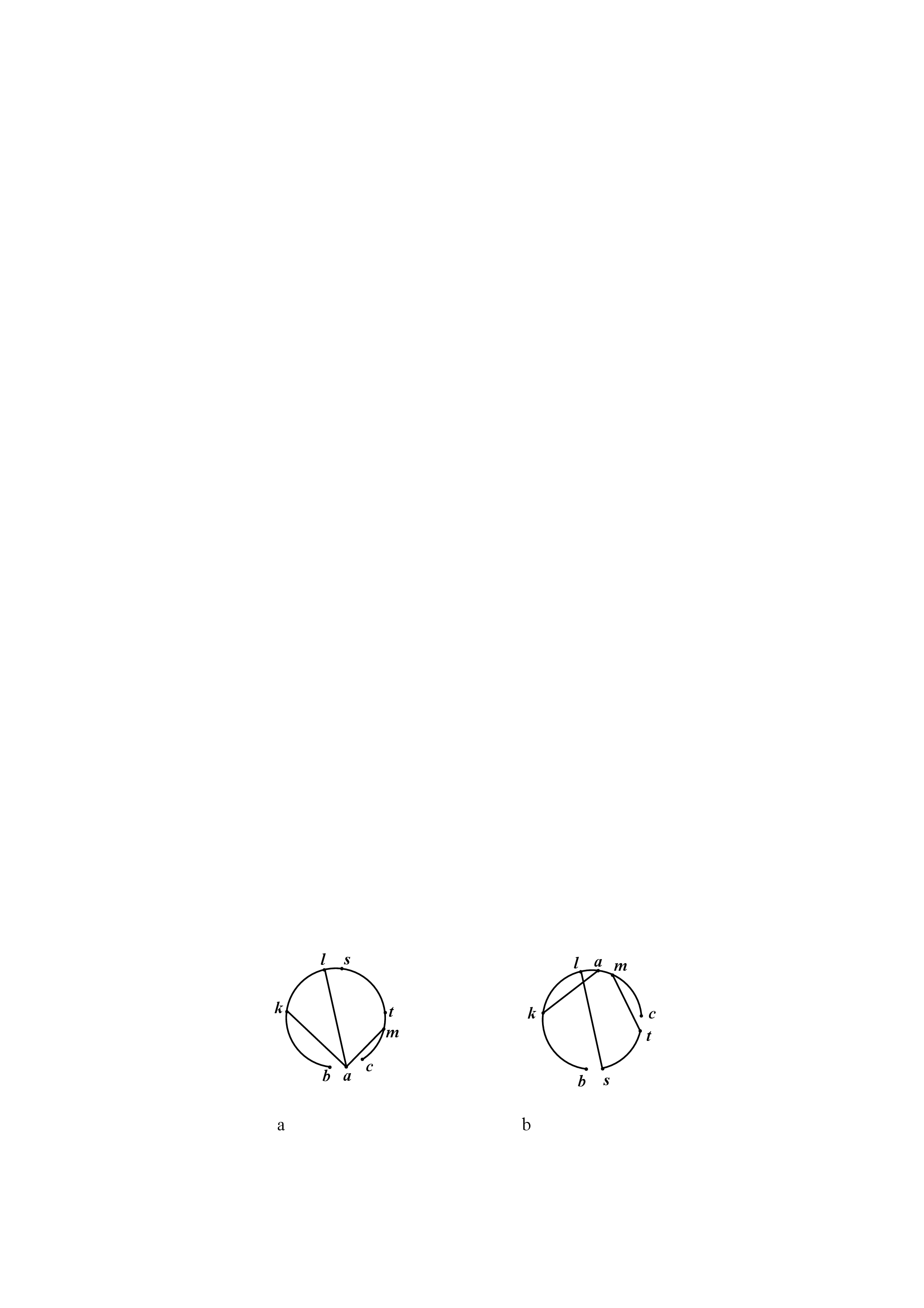}
\caption{A transformation that eliminates a degenerate segment
containing a node with degree $d = 3$ (in Figure 2.7a). Edges
$e(a-l)$ and $e(a-m)$ are moved to the contour, and windows
$w(a-b)$ and $w(a-c)$ are substituted by two windows, $w(b-s)$,
$w(c-t)$(in Figure 2.7b).\label{fig2-7}}
\end{figure}

{\bf Case 1.6}: If at least one of nodes $k$, $l$ and $m$ forms a
window, then the degenerate segment $S_{a,a}$ is eliminated from
the object by either of Cases 1.1 and 1.2 above.

We note that the above cases are sufficient to eliminate all
interior edges between windows, and all degenerate segments, which
completes Step 1.

{\em Step 2: Introduction of additional windows}

We can introduce additional windows if there are special
structures involving the free edges in the object formed in Step
1. Recall that the free edges are those that are not incident to
any window nodes. We note that introduction of additional windows
should not introduce any links, or else property 1 will be
violated.

The windows are added by means of elementary transformations of
one free edge to the contour, and two edges of the contour to the
interior. There are two cases when it is possible to introduce an
additional window to the object formed in Step 1:
\begin{figure}
\centering
\includegraphics[width=0.5\textwidth]{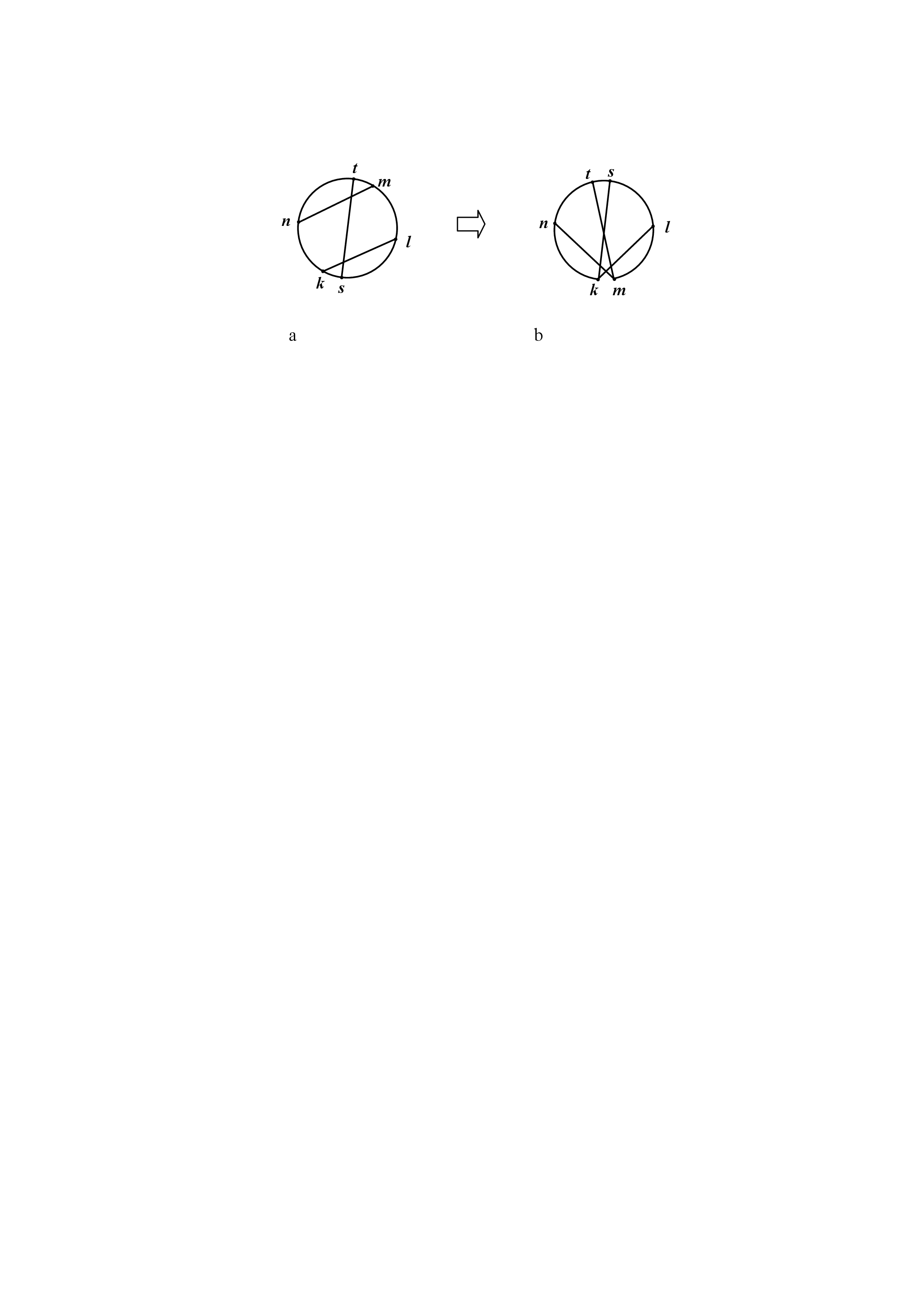}
\caption{A transformation that creates a windows $w(k-m)$ (in
Figure 2.8b), using free edges $e(s-t)$, $e(k-l)$ and $e(m-n)$ (in
Figure 2.8a).\label{fig2-8}}
\end{figure}

{\bf Case 2.1}: Suppose that an object formed in Step 1 contains
free edges $e(k-l)$, $e(m-n)$ and $e(s-t)$ such that $m$ is a
neighbour of $t$ on the contour, $k$ is a neighbour of $s$ on the
contour, and $k$ and $m$ are on different sides of edge $e(s-t)$.
Nodes $l$ and $n$ can be anywhere on the contour (see Figure
2.8a). An additional window $w(k-m)$ is introduced by the
transformation that moves interior edge $e(s-t)$ to the contour,
and moves edges $e(k-s)$ and $e(t-m)$ to the interior (see Figure
2.8b). This transformation is also valid if one or both of the
edges $e(k-l)$ and $e(m-n)$ are not present. In this case, one or
both of nodes $k$ and $m$ will have degree 2.

{\bf Case 2.2}: Suppose than on object formed in Step 1 contains
free edges $e(k-l)$, $e(m-n)$ and $e(s-t)$, such that $m$ is a
neighbour of $t$ on the contour, and $k$ is a neighbour of $s$ on
the contour, and nodes $k$, $m$ and $n$ are all on the same side
of edge $e(s-t)$, and a window $w(a-b)$ exists (see Figure 2.9a).
An additional window can be formed by a transformation of the part
of the contour bounded by nodes $m$ and $k$. In Figure 2.9a, this
is the part of the contour to the left of edge $e(s-t)$) to become
a segment between nodes $a$ and $b$ (see Figure 2.9b). As a result
of this transformation, we substitute window $w(a-b)$ with windows
$w(a-k)$ and $w(b-m)$. This transformation is also valid if one or
both of the edges $e(k-l)$ and $e(m-n)$ are not present. In this
case, one or both of nodes $k$ and $m$ will have degree 2.
\begin{figure}
\centering
\includegraphics[width=0.5\textwidth]{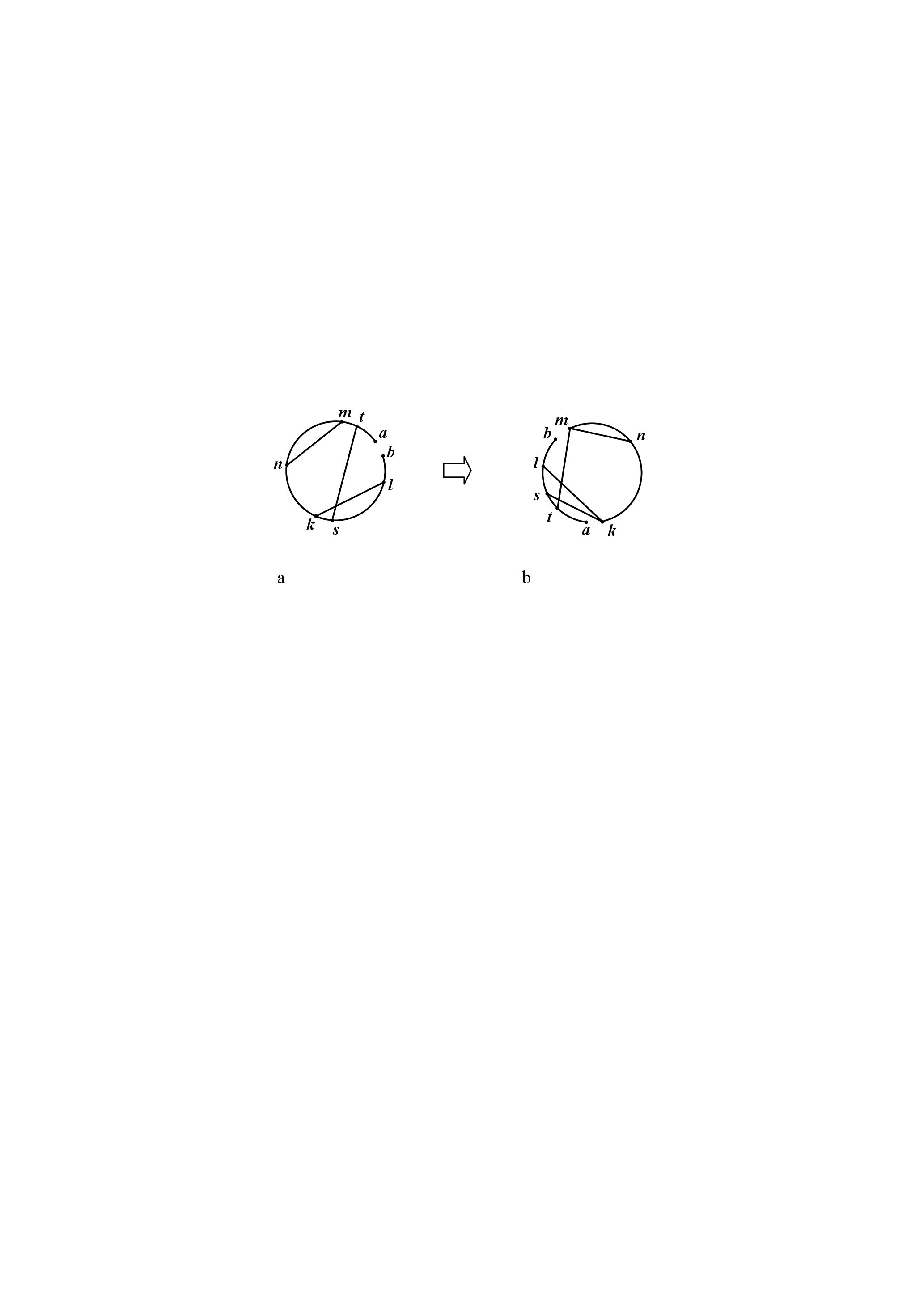}
\caption{A transformation that creates an additional window by
moving a part of the contour bounded by $m$ and $k$ between $a$
and $b$ that form window $w(a-b)$ (in Figure 2.9a). This
transformation substitutes window $w(a-b)$ by two windows,
$w(a-k)$ and $w(b-m)$ (in Figure 2.9b).\label{fig2-9}}
\end{figure}

Therefore, in both cases, if we have a special configuration of
free edges, we can construct additional windows so that the edges
$e(k-l)$, $e(m-n)$, $e(s-t)$ are no longer free. If the object
contains configurations analogous to those considered above (see
Figures 2.8a, 2.9a), then additional edges should be moved to its
contour. We note that none of these transformations should be
performed if they result in degenerate segments that contain a
node whose degree $d = 3$.

A basic object is completed if it is does not contain free edges
that could lead to the introduction of additional windows using
the transformations described above. In such a case, we say that
the number of windows in the object is maximal. Note that
depending on the construction of the graph, it may be possible to
construct a different object containing more windows, but it is
not necessary to do so. A basic object may contain one or more
free edges, if the configuration of those free edges does not
coincide with those described above. This is the only criteria
required to complete the second step of the construction of a
basic object.

\subsection{Second basic object}

The second basic object should possess the same properties (see
Subsection 2.1.1) as the first basic object. The second basic
object is constructed from the first basic object. The two basic
objects will have no common interior edges. This requires us to
introduce additional restrictions in the algorithm for
constructing the second basic object, that is otherwise analogous
to the algorithm that constructs the first basic object. These
restrictions concern the structure of the contour, and specify the
nodes which cannot not become window nodes in the second basic
object. We satisfy these new requirements in the first step of the
construction of the second object.

\subsubsection{Preliminary formation of the second basic object}

Suppose that the first basic object is constructed, and satisfies
all of the required properties. Then, we identify:
\begin{enumerate}
\item[(1)] The window nodes, and the interior edges incident to
these nodes. The number of such edges will be either two (if the
node has degree $d = 3$), or one (if the node has degree $d = 2$).
\item[(2)] The free edges. \item[(3)] Degenerate segments that
contain a single node of degree $d = 2$. \item[(4)] The set of
nodes with degree $d = 2$ that do not form windows.
\end{enumerate}

Note that the first basic object cannot contain any interior edges
or nodes of degree $d = 2$ that can be different from the cases
described above. This phenomenon is determined by the properties
of the construction of the first basic object: the original graph
has degree $d \leq 3$; the windows of the first basic object are
not linked by interior edges; and the first basic object does not
contain degenerate segments of degree $d = 3$.

The first basic object is constructed without restrictions on the
contour edges, interior edges, or on the configuration of nodes
with degree $d = 2$. The construction of the second basic object
depends on the particular structure of the first basic object.

{\em Algorithm of the preliminary construction of the second basic
object:}
\begin{enumerate}
\item[(1)] Identify in the first basic object:
\begin{enumerate}
\item {\em Chains} that consist of two interior edges that are
both incident to a window node of degree $d = 3$. \item Interior
edges that are incident to the window nodes of degree $d = 2$.
These nodes will form windows in the second basic object as well.
\item Free edges. \item Chains that consist of two interior edges
incident to a node of degree $d = 2$ that form a degenerate
segment. \item Nodes of degree $d = 2$ that do not form windows.
These nodes will form degenerate segments in the second basic
object.
\end{enumerate}
\item[(2)] The identified chains consisting of two edges, free
interior edges, and the degenerate segments of degree $d = 2$ all
move to the contour, and are all separated by windows. \item[(3)]
Connect the nodes of the contour in the second basic object by
interior edges that belong to the contour of the first basic
object. \item[(4)] Now, we eliminate some interior edges some
interior edges from the second basic object that cannot be moved
to the contour. These are edges that are:
\begin{enumerate}
\item Incident to window nodes of degree $d = 2$. \item Incident
to window nodes in the first basic object, and belong to its
contour. \end{enumerate} \item[(5)] Mark those nodes that cannot
form windows. By means of the first two algorithms of eliminating
links between windows (see Subsection 2.1.2, Cases 1.1 and 1.2)
move, to the contour, interior edges incident to these marked
nodes. \item[(6)] Mark edges of the contour that cannot be moved
to the interior.
\end{enumerate}
\begin{figure}
\centering
\includegraphics[width=0.5\textwidth]{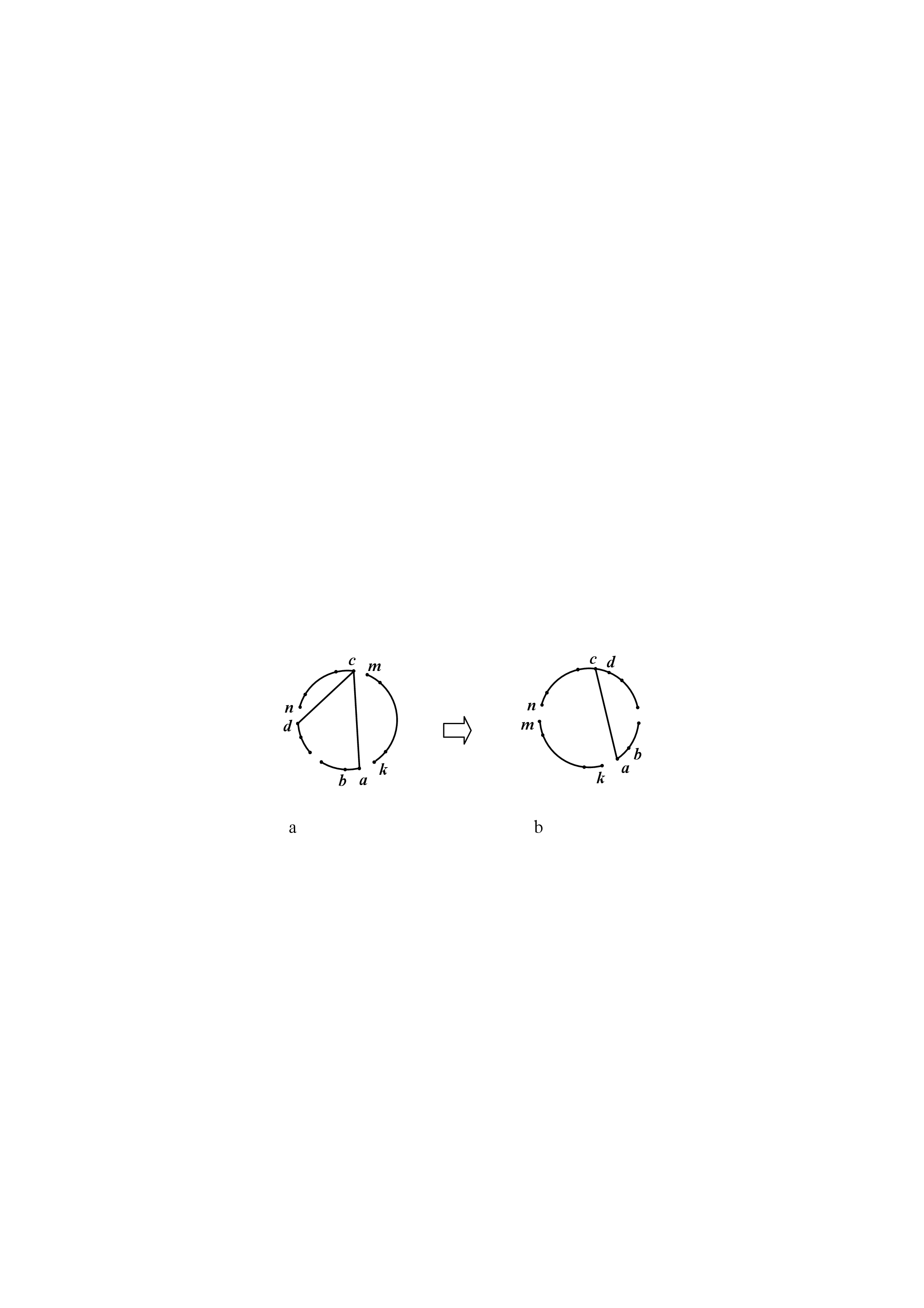}
\caption{A transformation that allows us to delete node $c$ from
the list of candidates for window nodes by elimination of the link
between windows $w(c-m)$, $w(d-n)$ (see Figures 2.10a and
2.10b).\label{fig2-10}}
\end{figure}
\begin{figure}
\centering
\includegraphics[width=0.8\textwidth]{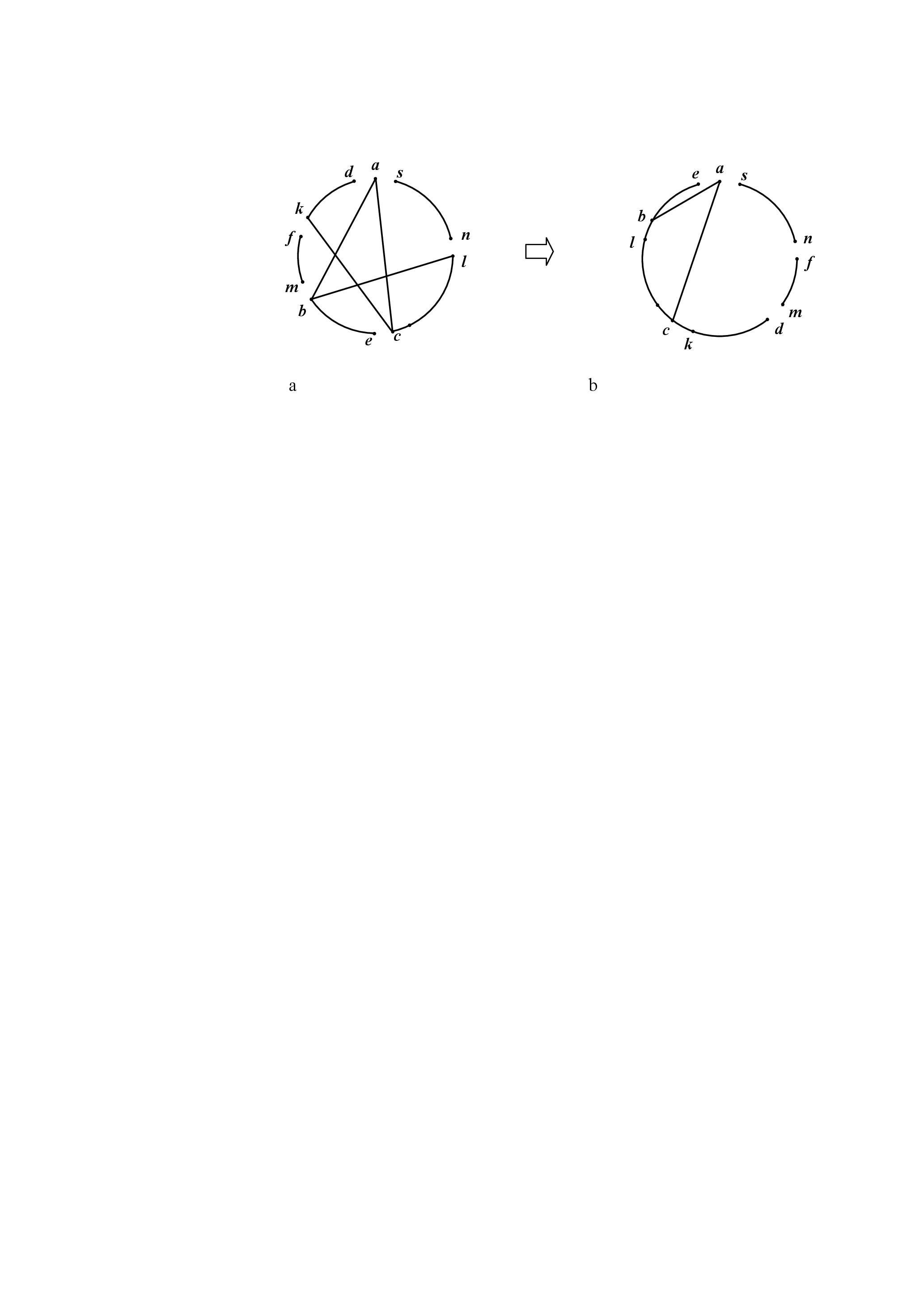}
\caption{A transformation that allows us to delete nodes $b$ and
$c$ from the list of candidates for window nodes by elimination of
the link between windows $w(b-m)$, $w(l-n)$ (edge $e(b-l)$), and
between windows $w(c-e)$, $w(k-f)$ (edge $e(k-c)$) (see Figures
2.11a and 2.11b).\label{fig2-11}}
\end{figure}

\begin{remark}
\begin{enumerate}\item[(A)]Consider, in more detail,
the possible situations of the appearance of nodes in the second
basic object that cannot form windows (corresponding to step (5)
above), and the elimination of those nodes. Suppose that in the
first basic object, there are window nodes of degree $d = 2$.
Then, if edge $e(a-c)$ that is incident to node $a$ belongs to the
contour of the first basic object, then the second basic object
will contain on the contour an edge $e(a-b)$ incident to the same
node $a$, which again forms a window in the second basic object
(see Figure 2.10a). In the second basic object the node $c$ should
not form a window. If it does, then the second basic object
violates the rules outlined in Section 2.  In this case, by
removing a link between windows $w(c-m)$ and $w(d-n)$, we can
delete node $c$ from the candidates to form windows in the second
basic object. We eliminate this link (interior edge $e(c-d)$) in
one of two ways described in Section 2. In Figure 2.10a, the
positions of windows $w(c-m)$ and $w(d-n)$ imply that the first
method should be used.  In another configuration of those windows,
the second method would be used. The situation when nodes $a$ and
$c$ have degree $d = 2$ (edge $e(c-d)$ is not present) cannot
happen, because two adjacent nodes of degree $d = 2$ are
substituted by a single node of degree $d = 2$. \item[(B)] Suppose
that the first basic object contains node $a$ of degree $d = 2$
that does not form a window, and edges $e(a-b)$ and $e(a-c)$
belong to the contour. Then, node $a$ should form a degenerate
segment in the second basic object, and nodes $b$ and $c$ should
not form windows (see Figure 2.11a). If this condition was
violated, then the second basic object would have windows linked
by interior edges, which would violate the rules outlined in
Subsection 2.1.1. As in item (A) above, nodes $b$ and $c$ are
eliminated from the set of nodes that can form windows in the
second basic object by temporarily deleting edges $e(b-l)$,
$e(c-k)$, between pairs of windows $w(b-m)$ and $w(l-n)$, and
$w(c-e)$ and $w(f-k)$ respectively (see Figure 2.11b). The
situation when nodes $a$ and $c$, and $a$ and $b$ have degree $d =
2$ (edges $e(b-l)$ and $e(c-k)$ are absent) is not possible, as
two or more adjacent nodes of degree $d = 2$ can be substituted by
a single node of degree $d = 2$.
\end{enumerate}
We note that elimination of links between windows by means of the
algorithms described in Subsection 2.1.2, Cases 1.1 and 1.2, are
performed so as to substitute one of the two windows of the
contour by a single interior edge. By doing this, neither edge of
the contour is moved to the interior.
\end{remark}
\begin{remark}
\begin{enumerate}
\item[(A)] One of the properties of the basic objects is that they
have a disjoint set of interior edges. In the preliminary
construction of the second basic object, the interior edges of the
first basic object are all moved to the contour of the second
basic object. During the subsequent steps, their movement to the
interior will be prohibited. \item[(B)] This restriction on moving
these particular contour edges to the interior is achieved by
temporarily deleting interior edges that cannot belong to the
contour (item (4) above). Then, the subproblems of the
construction of the second basic object whose windows are not
linked by interior edges, and of the introduction of additional
windows into the second basic object, are performed as outlined in
Subsection 2.1.2. This completes the preliminary construction of
the second basic object.\end{enumerate}\end{remark}

\subsubsection{Completion of the second basic object}

\begin{figure}
\centering
\includegraphics[width=0.4\textwidth]{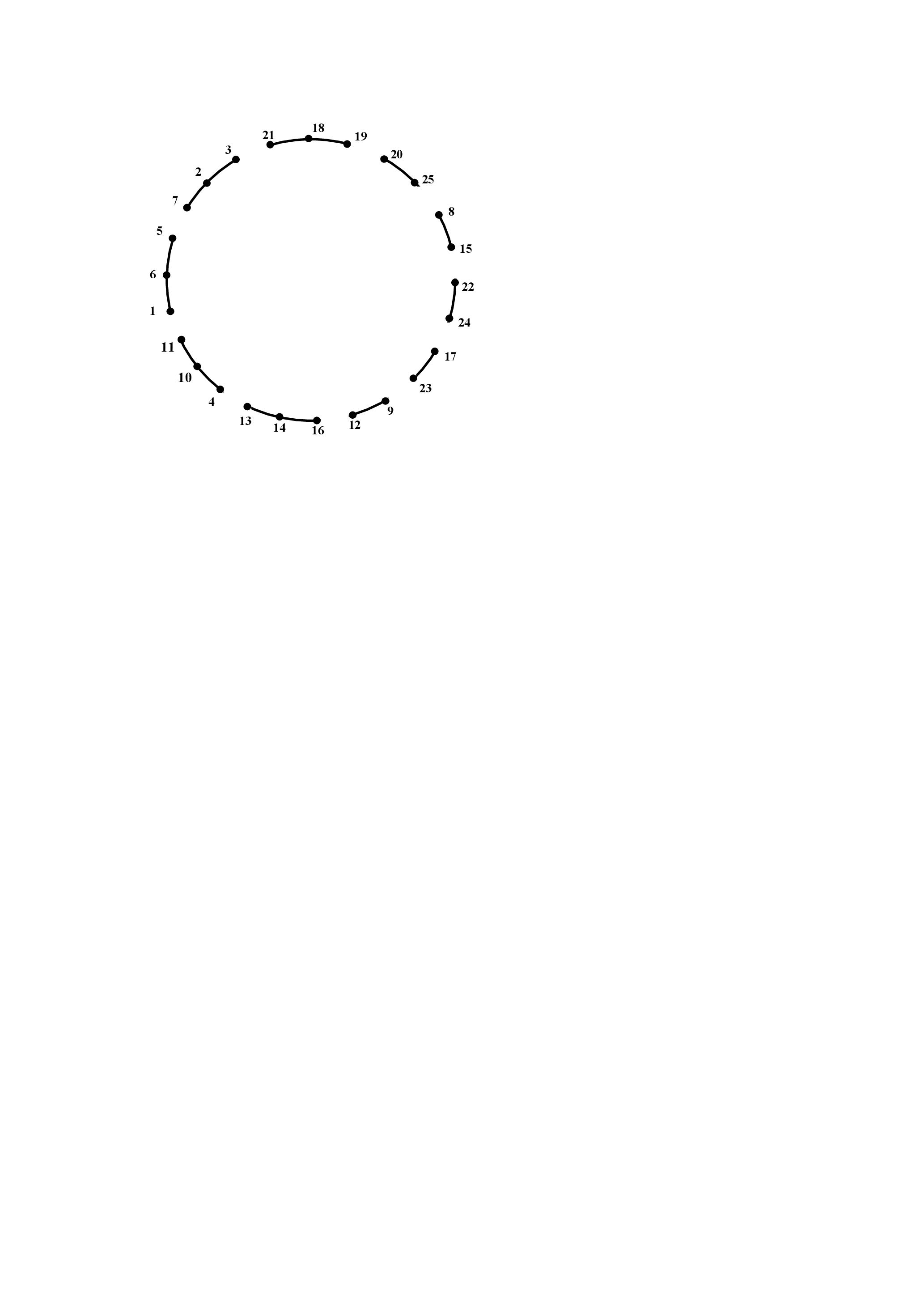}
\caption{Configuration of individual edges, chains of two edges,
and windows needed to form the contour of the second
object.\label{fig2-12}}
\end{figure}
\begin{figure}
\centering
\includegraphics[width=0.4\textwidth]{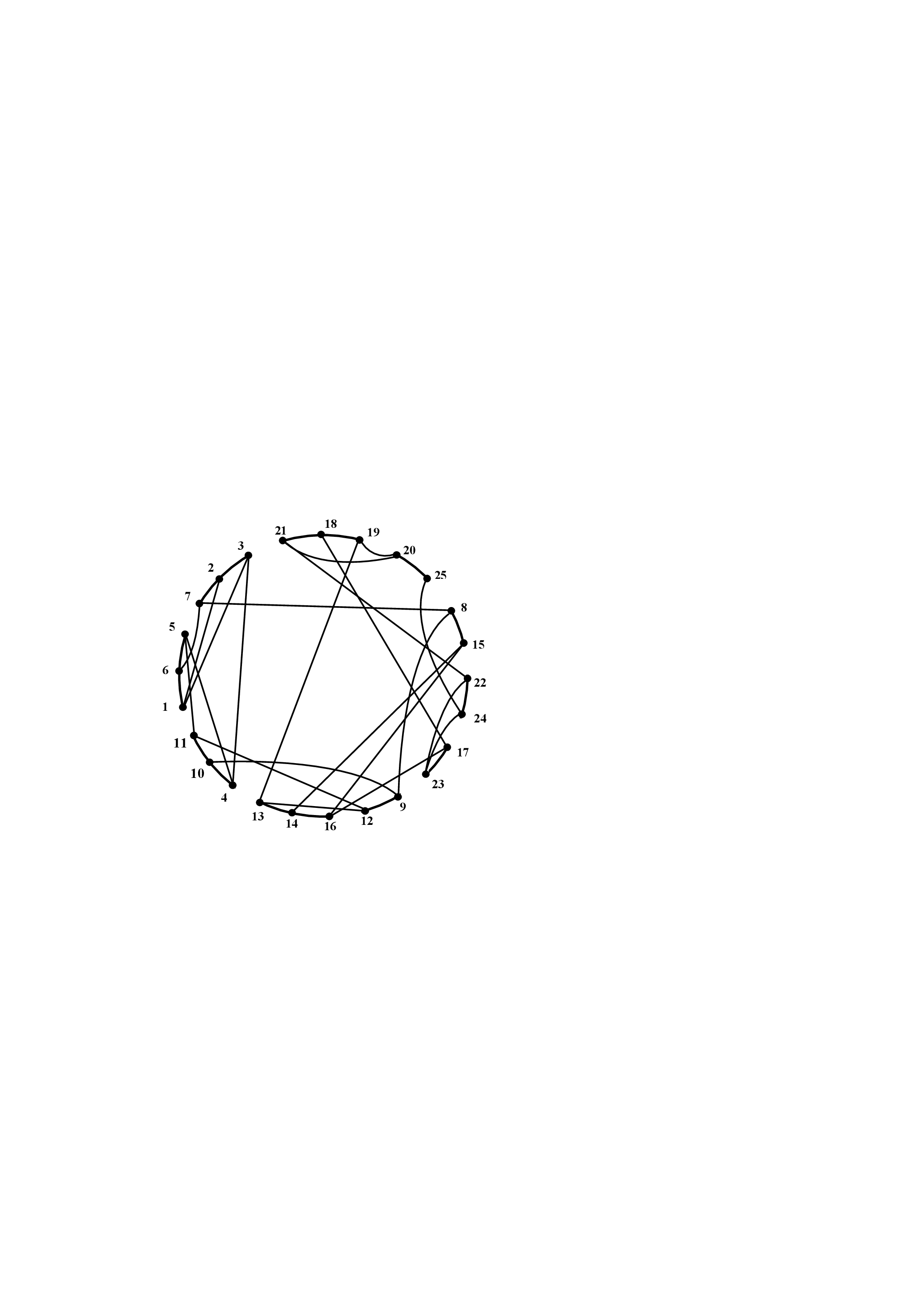}
\caption{The second basic object, in which the missing interior
edges belonging to the contour of the first basic object are
added.\label{fig2-13}}
\end{figure}
\begin{figure}
\centering
\includegraphics[width=0.4\textwidth]{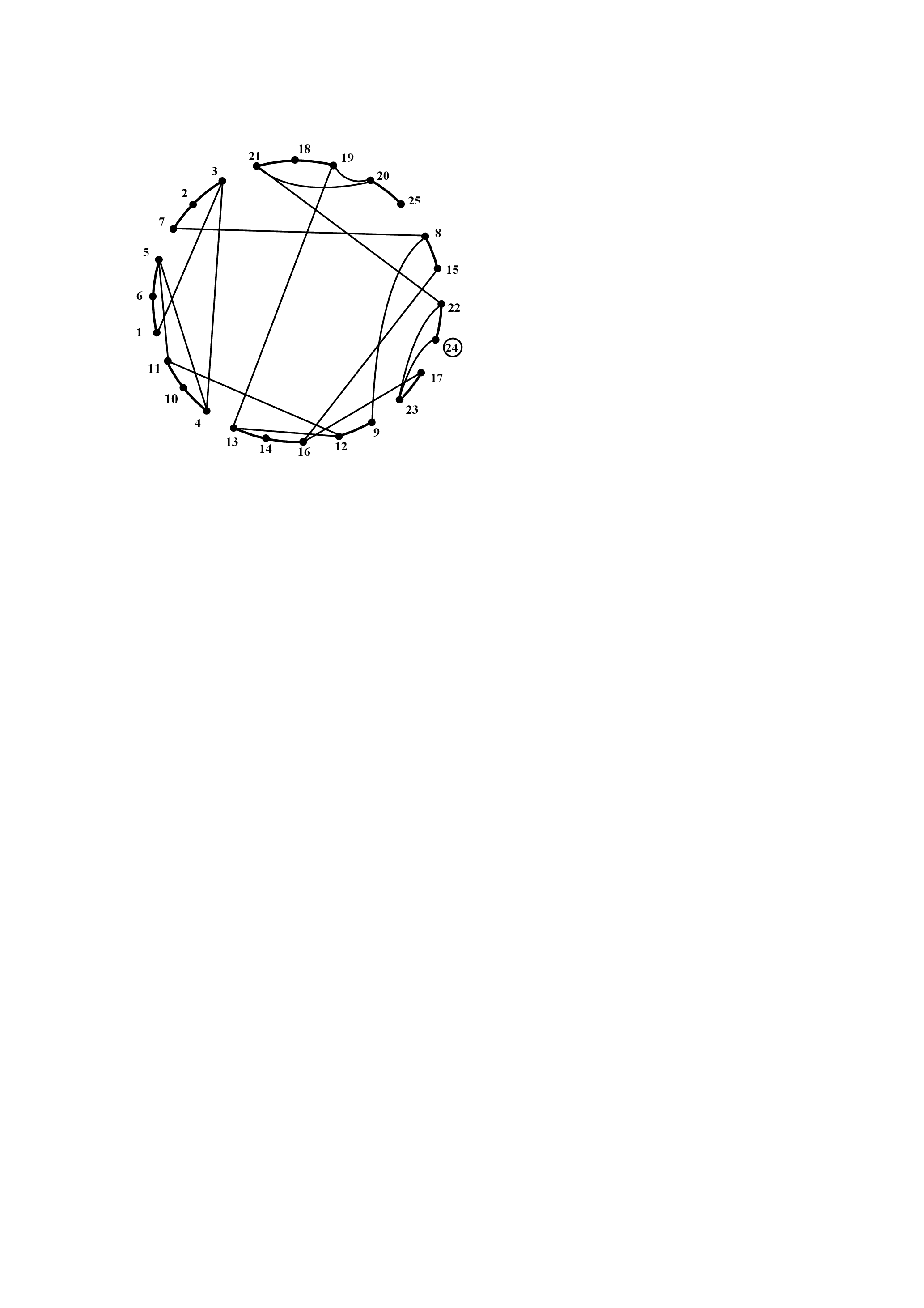}
\caption{The second basic object, with deleted interior edges that
can not belong to its contour. Node 24 that can not form a window
is marked.\label{fig2-14}}
\end{figure}

Construction of the second basic object is performed using the
same algorithm as the construction of the first basic object, that
is, the construction of an object whose windows are not linked by
interior edges, and the subsequent introduction of additional
windows.

Consider the following example.
\begin{example} In Figure 2.1, the first basic object for a certain
25-node graph is displayed. The construction of the second basic
object proceeds as follows:
\begin{enumerate}\item[(1)] In the first basic object, we
select:
\begin{enumerate}\item Chains of the interior that consist of two
edges incident to the window nodes: $e(16-14)$, $e(14-13)$;
$e(4-10)$, $e(10-11)$; $e(1-6)$, $e(6-5)$; $e(7-2)$, $e(2-3)$;
$e(21-18)$, $e(18-19)$. \item Interior edges that are incident to
window nodes of degree $d = 2$: $e(20-25)$. \item Free edges:
$e(8-15)$, $e(22-24)$, $e(17-23)$, $e(9-12)$.
\end{enumerate}
\item[(2)] The above selected edges separated by windows, form the
preliminary contour (see Figure 2.12): (16-14-13 W 4-10-11 W 1-6-5
W 7-2-3 W 21-18-19 W 20-25 W 8-15 W 22-24 W 17-23 W 9-12).
\item[(3)] Connect nodes (see Figure 2.13) by interior edges that
were not selected in (1), and correspond to contour edges in the
first basic object: $e(1-2)$, $e(1-3)$, $e(3-4)$, $e(4-5)$,
$e(5-11)$, $e(11-12)$, $e(12-13)$, $e(13-19)$, $e(19-20)$,
$e(20-21)$, $e(21-22)$, $e(22-23)$, $e(23-24)$, $e(24-25)$,
$e(14-15)$, $e(15-16)$, $e(16-17)$, $e(17-18)$, $e(9-10)$,
$e(8-9)$, $e(7-8)$, $e(6-7)$. \item[(4)] From the set of interior
edges, we temporarily remove those edges that cannot be moved to
the contour of the second basic object: $e(17-18)$, $e(14-15)$,
$e(6-7)$, $e(1-2)$, $e(9-10)$, $e(24-25)$ (see Figure 2.14). Once
these edges are temporarily removed, node 25 becomes an end node
(that is, node 25 acquires degree $d = 1$), and nodes 24, 9, 15,
1, 7, 17 acquire degree $d = 2$. \item[(5)] Mark the nodes that
cannot form windows. In this example, node $24$ can not form a
window. This node is circled (see Figure 2.14). By the algorithm
of eliminating links between window nodes (see Subsection 2.1.2,
Case 1.2), we move the edge $e(23-24)$, incident to node 24, to
the contour. This completes the preliminary construction of the
second basic object (see Figure 2.15).\end{enumerate}
\begin{figure}
\centering
\includegraphics[width=0.4\textwidth]{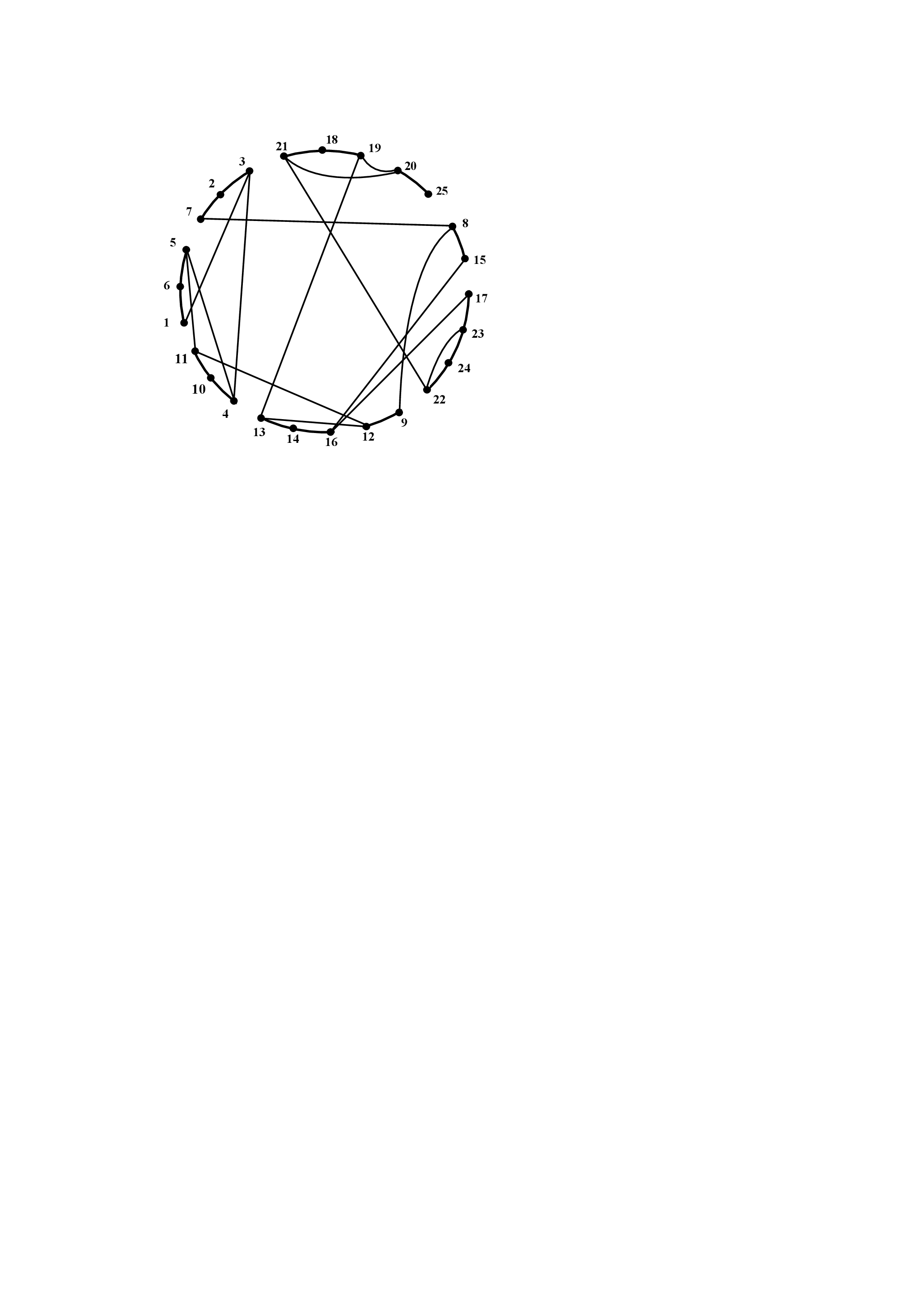}
\caption{An object in which node 24 is deleted from the list of
nodes that can form windows, by means of eliminating the link
between windows $w(9-23)$, $w(17-24)$.\label{fig2-15}}
\end{figure}
\begin{figure}
\centering
\includegraphics[width=0.4\textwidth]{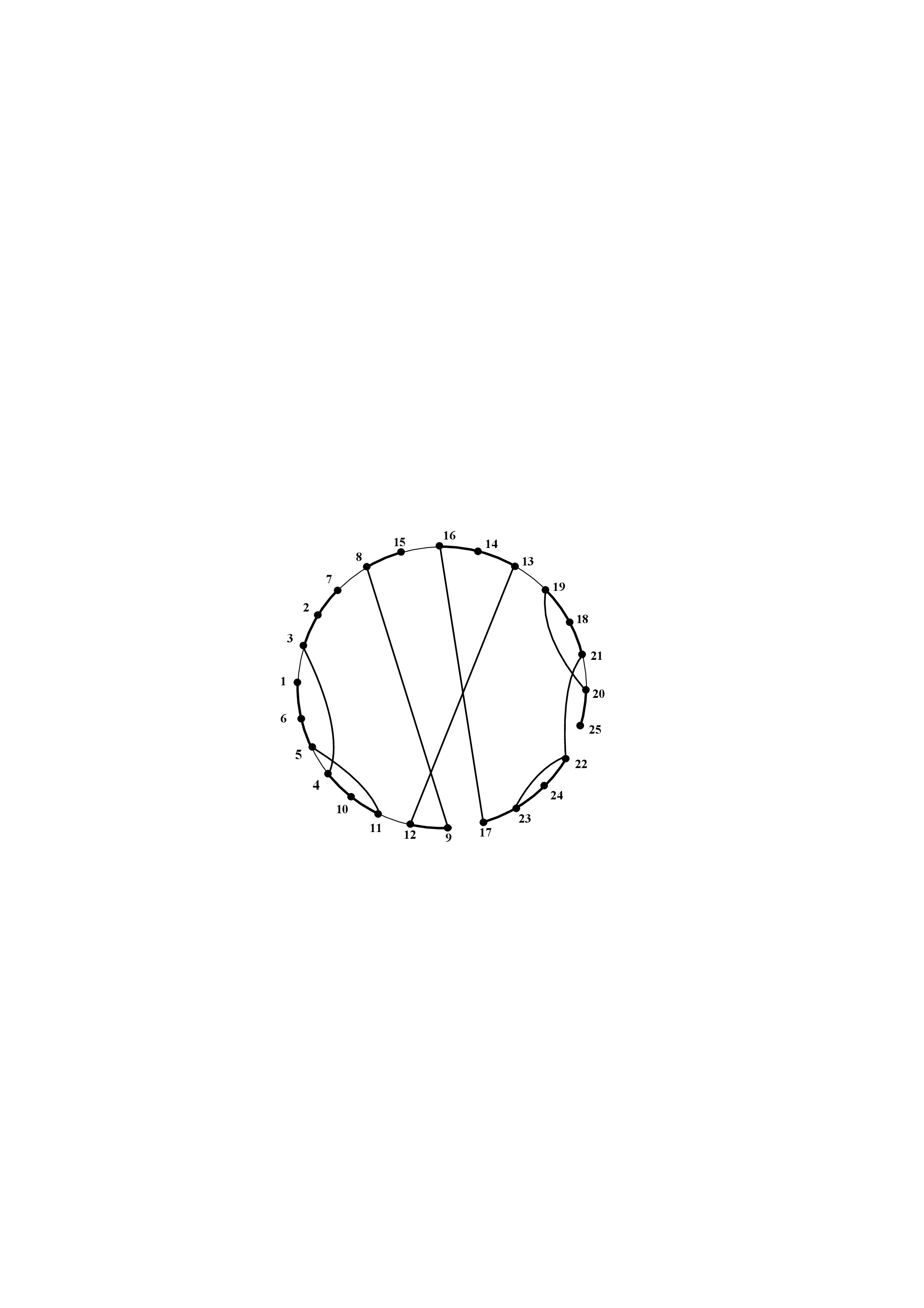}
\caption{A segment of the second object where the links between
windows are eliminated by means of interior edges. Free edges
$e(19-20)$, $e(5-11)$, $e(3-4)$, $e(12-13)$ are identified that
determine a possibility of forming additional
windows.\label{fig2-16}}
\end{figure}

We note that the second object at this stage does not contain
window nodes of degree $d = 2$. For this reason, this second
object does not contain degenerate segments. The edges of the
contour of the first basic object that should also belong to the
contour of the second basic object, and therefore cannot be moved
to its interior, are displayed in bold (see Figure 2.15).

An object whose windows are not linked by interior edges (see
Figure 2.16) is constructed using the algorithm described in
\cite{9} (which uses the steps outlined in Subsection 2.1.2, Cases
2.1--2.4). At this stage, construction of the object whose windows
are not linked by interior edges is complete.

The next step is to introduce additional windows. We use the same
algorithm (in Subsection 2.1.2) that was used in the construction
of the first basic object. If we encounter an instance where an
additional window can be introduced by means of free edges, then
we introduce this window and continue to search until all
instances are exhausted. In our example (see Figure 2.16), there
are at most four possibilities of introduction of an additional
windows. The object contains four free edges: $e(12-13)$,
$e(3-4)$, $e(9-11)$, $e(19-20)$, that determine the number of
possibilities. Consider all possible ways to introduce an
additional window by using any of the listed four free edges.
\begin{figure}
\centering
\includegraphics[width=0.4\textwidth]{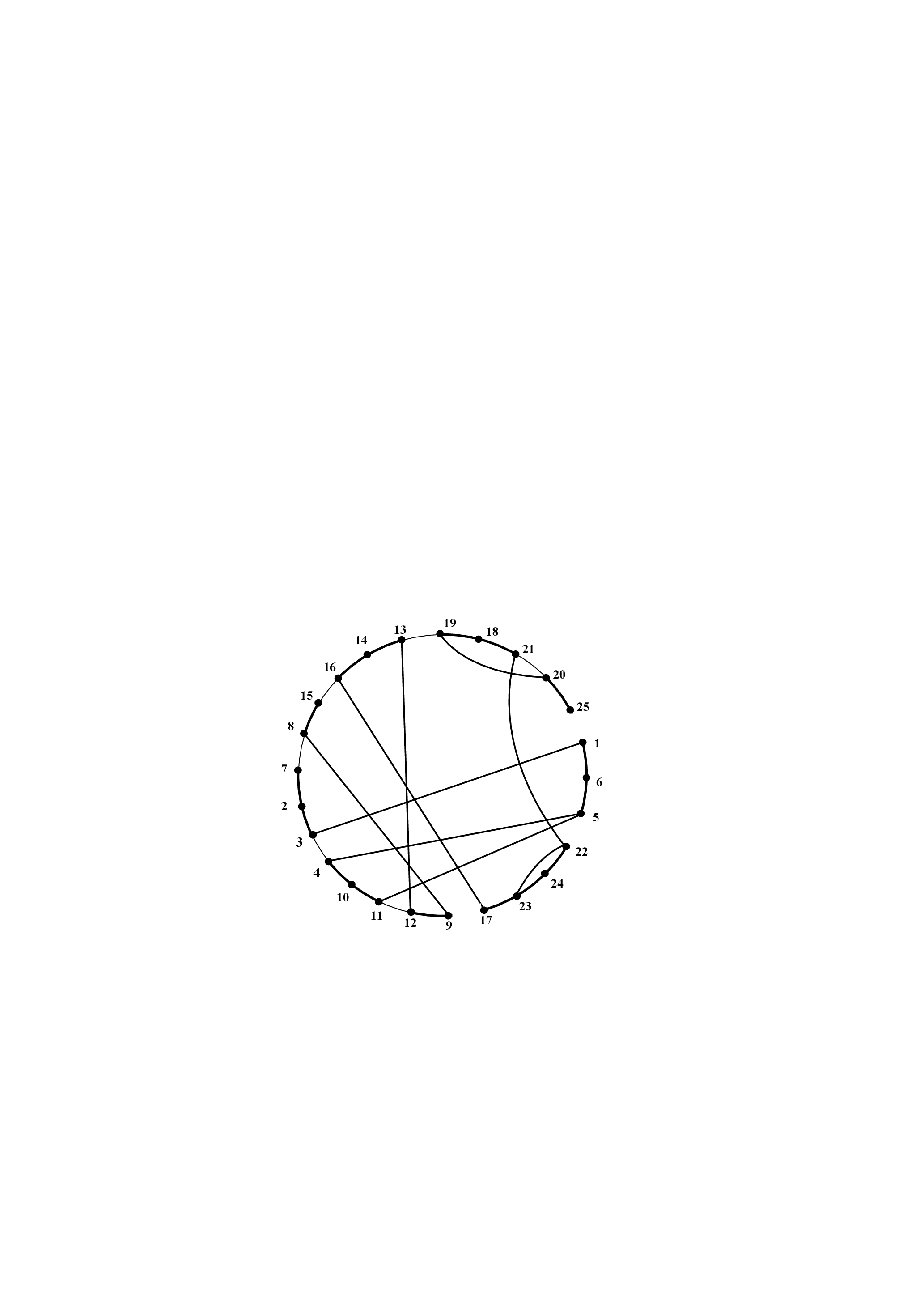}
\caption{A segment of the second object where an additional window
is formed by means of moving, between window nodes 22 and 25, the
segment $S_{1,5}$.\label{fig2-17}}
\end{figure}
\begin{figure}
\centering
\includegraphics[width=0.4\textwidth]{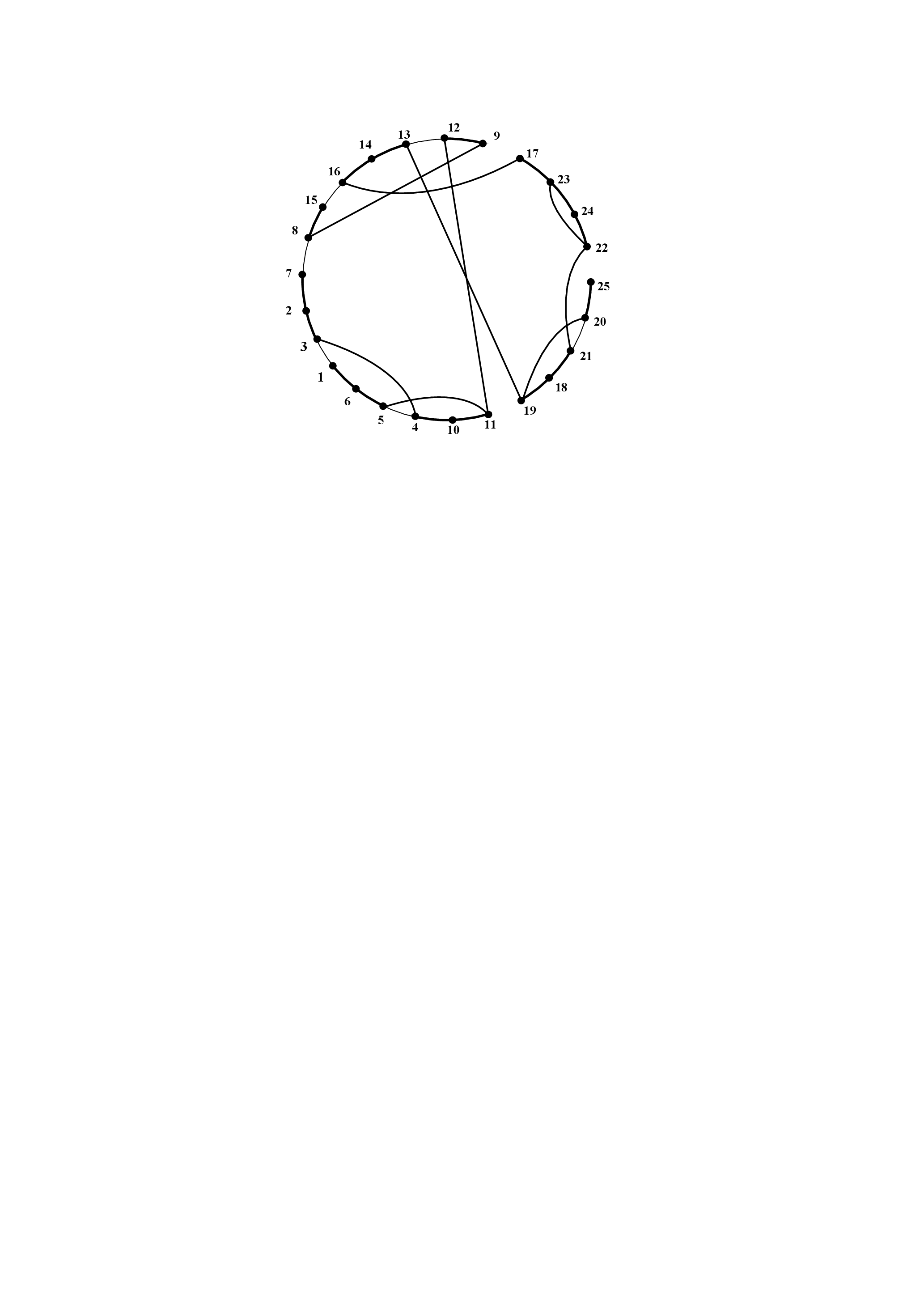}
\caption{A segment of the second object where an additional window
$w(11-19)$ is formed by moving edge $e(12-13)$ to the contour, and
edges $e(11-12)$, $e(19-13)$ to the interior.\label{fig2-18}}
\end{figure}
\begin{figure}
\centering
\includegraphics[width=0.4\textwidth]{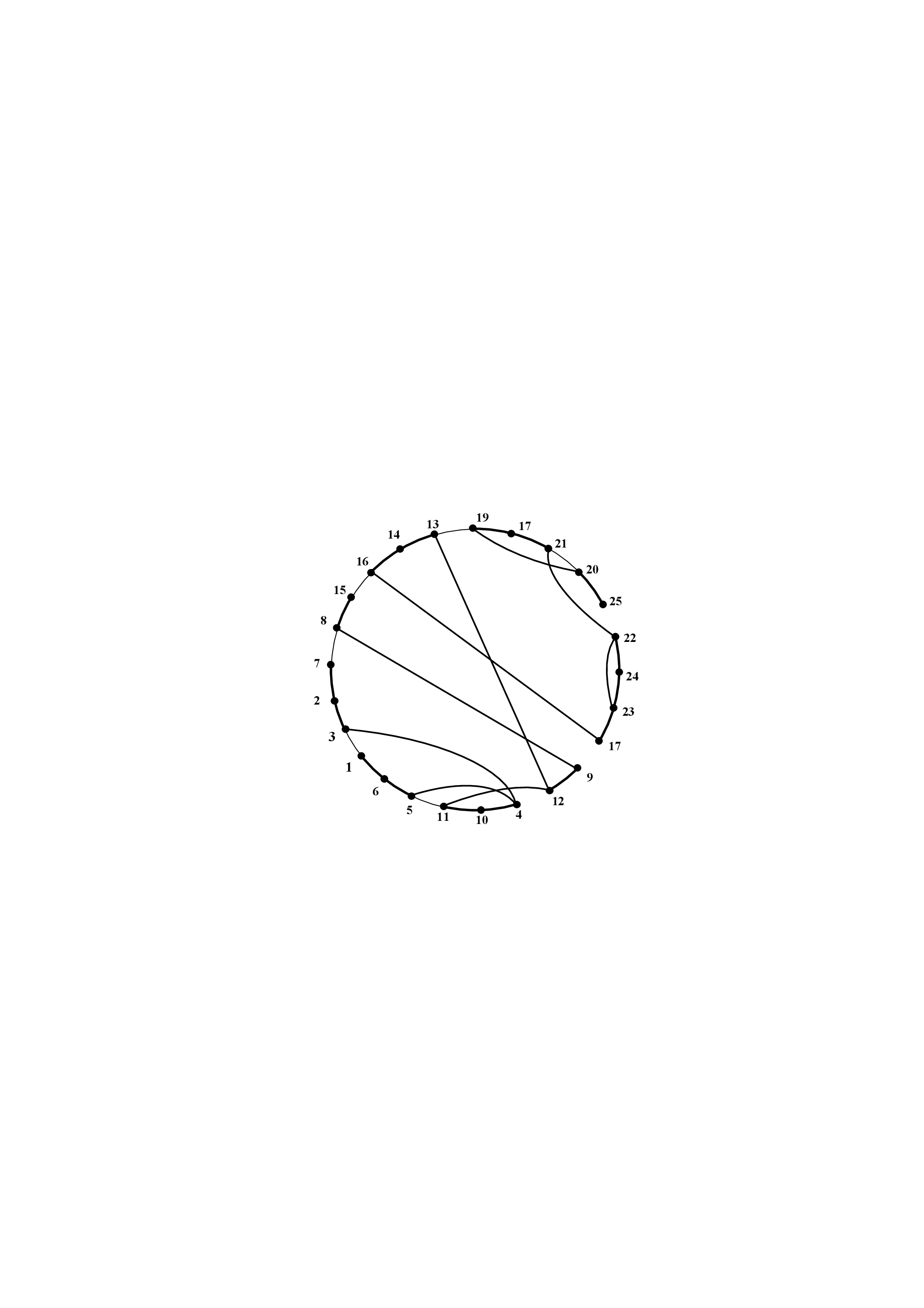}
\caption{A segment of the second object, where additional window
$w(4-12)$ is formed by moving edge $e(5-11)$ to the contour, and
edges $e(4-5)$, $e(11-12)$ to the interior.\label{fig2-19}}
\end{figure}
\begin{figure}
\centering
\includegraphics[width=0.4\textwidth]{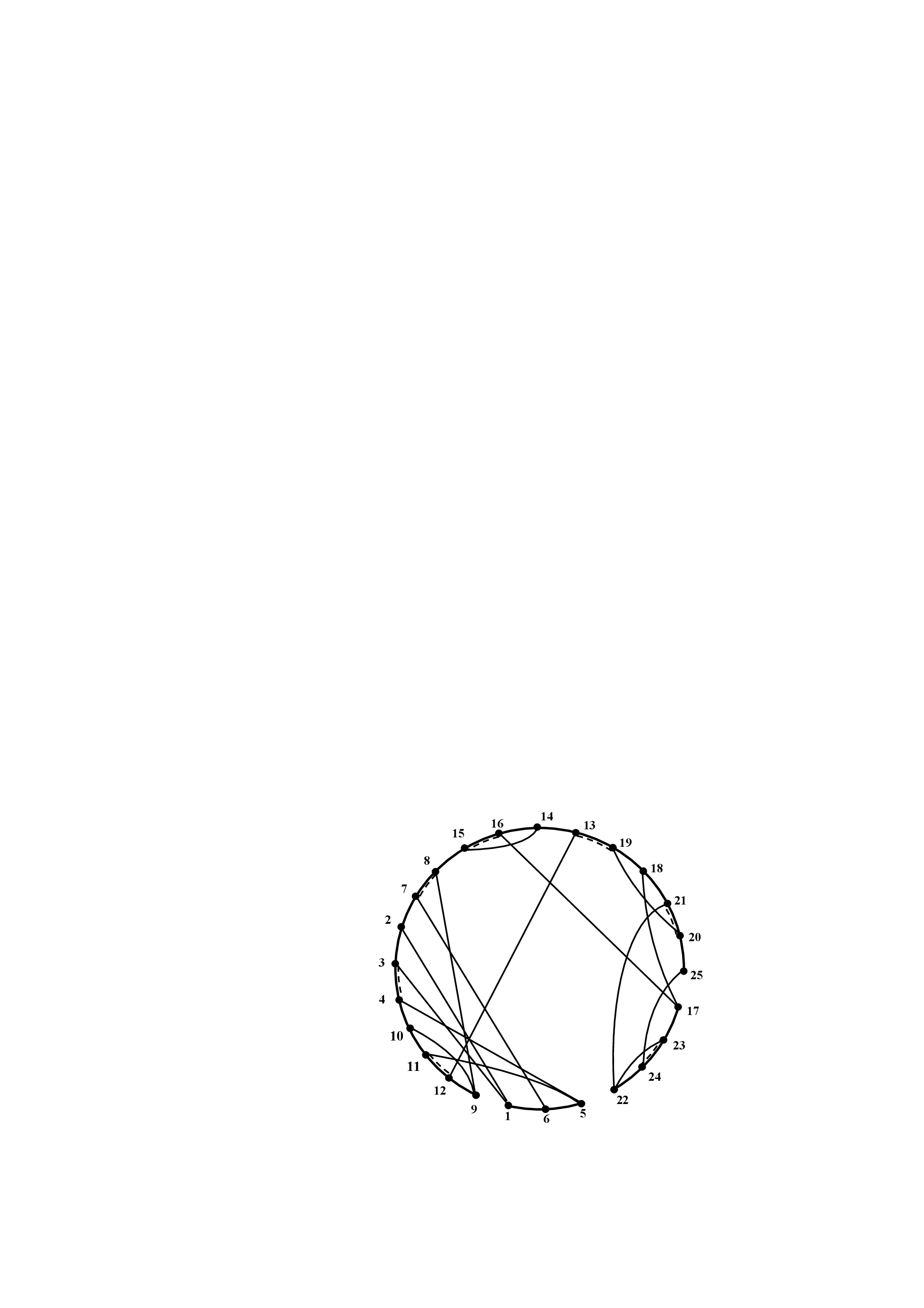}
\caption{The second basic object where temporarily deleted
interior edges are reinstated. Common contour edges are marked
with dashed lines.\label{fig2-20}}
\end{figure}

\begin{enumerate}
\item[(1)] There exists a free edge $e(5-11)$ and node 1 of degree
$d = 2$ that are adjacent to the free edge $e(3-4)$ on one side
(see Figure 2.16). An additional window is introduced by moving
part of the contour bounded by nodes $1$ and $5$ between any two
nodes that form a window. In this example, window $w(22-25)$ or
$w(9-17)$. We move this part of the contour between nodes 22 and
25 (see Figure 2.17). \item[(2)] Free edges $e(5-11)$ and
$e(19-20)$ are adjacent to free edge $e(12-13)$ on two different
sides (see Figure 2.16). An additional window $w(11-19)$ is
introduced by moving edge $e(12-13)$ to the contour, and edges
$e(11-12)$ and $e(19-13)$ to the interior (see Figure 2.18).
\item[(3)] Free edges $e(3-4)$ and $e(12-13)$ are adjacent to free
edge $e(5-11)$ on two different sides (see Figure 2.16). An
additional window $w(4-12)$ is introduced by moving edge $e(5-11)$
to the contour, and edges $e(4-5)$ and $e(11-12)$ to the interior
(see Figure 2.19). \item[(4)] Free edge $e(19-20)$ (see Figure
2.16) can not be used to introduce an additional window because
node 20 does not have adjacent nodes that satisfy the possibility
of introduction of additional windows (edge $e(21-22)$ is not a
free edge, and node 25 forms a window).\end{enumerate} From all of
these possibilities, only one can be implemented. Two or more
possibilities can not be implemented in this example because,
otherwise, the condition of absence of links between windows will
be violated. The second object with the temporarily deleted edges
being reintroduced is displayed (see Figure 2.20). Comparing the
first (see Figure 2.1) and the second basic objects (see Figure
2.20) we observe the following:
\begin{enumerate}
\item The sets of interior edges for each basic object are
disjoint. \item The contours for each basic object contain a
number of common edges: $e(3-4)$, $e(11-12)$, $e(7-8)$,
$e(15-16)$, $e(13-19)$, $e(21-20)$, $e(23-24)$, that appeared
during the construction of the second basic object. These edges
are marked by dotted lines (see Figure 2.20). \item No links exist
in either basic objects.\end{enumerate}\end{example}

\section{Selection of edges and windows}\label{sec3}

Suppose that we have constructed both basic objects for a graph of
degree $d \leq 3$. Suppose that such a graph contains one or more
Hamiltonian cycles. Edges of the graph can be naturally separated
in three disjoint groups:
\begin{enumerate}
\item[(1)] Edges that belong to all Hamiltonian cycles. \item[(2)]
Edges that belong to no Hamiltonian cycles. \item[(3)] Edges that
belong to some but not all Hamiltonian cycles.\end{enumerate}

Some edges from both basic objects can be separated into two
disjoint groups:
\begin{enumerate}
\item[(1)] Interior edges of both basic objects (these sets of
disjoint). \item[(2)] Common edges of the contours of both basic
objects.
\end{enumerate}

The procedure of selection of edges includes not only separation
into group, but also the assigning of weights to some edges that
determines the weights of all other edges, and of windows in both
basic objects. The assigning of weights can be performed for an
additional edge or window of the contour, or to a group of edges
and windows.

\subsection{Assigning of weights to the edges of both basic objects}
\subsubsection{Selection of contour edges and windows}

To any edge of the contour, a weight can be assigned. We assign a
particular weight (-1, 0, or +1) to an edge or window of the
contour of a basic object, that determines the weights of all
other edges and windows of the basic object. Initially, the
weights of all edges and windows are 0. Suppose that we select a
contour edge $e(m-n)$. If $N$ is odd, we assign weight -1 to this
edge, and then we perform the following operations:
\begin{enumerate}
\item Assign weight -0.5 to nodes $m$ and $n$. \item Moving along
the nodes of the contour, starting with node $s$ adjacent to node
$m$ we assign to each node alternating weights +0.5, -0.5. \item
The weight of every edge and every window is assigned by adding
the weights of their adjacent nodes. With this approach we obtain:
\begin{enumerate}\item The weight of one particular edge (edge $e(m-n)$)
becomes -1. \item The weight of all contour edges and windows
other than $e(m-n)$ remain 0. \item Any interior edge attains the
weight attains a value that can be -1, 0 or 1.\end{enumerate}
\end{enumerate}
We could also assign the initial weight of -1 to a window
$w(k-l)$, and perform the same operations as above.

If $N$ is even, we assign weight 0 to edge $e(m-n)$, and then we
perform the following operations:
\begin{enumerate}
\item Assign weights +0.5 to node $m$, and -0.5 to node $n$. \item
Moving along the nodes on the contour, starting with node $t$
adjacent to node $m$ we assign to each node alternative weights
-0.5, +0.5. \item The weight of every edge and every window is
assigned by adding the weights of their adjacent nodes. With this
approach we obtain:
\begin{enumerate}\item The weight of all contour edges and windows
(including $e(m-n)$) remain 0.\item Any interior edge attains the
weight attains a value that can be -1, 0 or 1.\end{enumerate}
\end{enumerate}

Consider the result of the selection of edges of a basic object,
assuming that the contour of this basic object is a fictitious
Hamiltonian cycle. A fictitious Hamiltonian cycle is a contour of
a basic object whose windows, initially, have the same 0 weight as
its edge. Suppose that a graph contains one or more Hamiltonian
cycles. Suppose that one of the contour edges is selected and
assigned a weight of -1. Independently of whether or not this edge
belongs to a Hamiltonian cycle, the combined length of any
Hamiltonian cycle will be equal to -1. If the selected edge is
given a weight of +1, then the length of every Hamiltonian cycle
will also be +1. This is because, on the contour, by our
assumption, a fictitious Hamiltonian cycle, where the windows
become fictitious edges with 0 weight, and the selection of any
edge or window implies the same change in the length of any real
Hamiltonian cycles as the same of any fictitious Hamiltonian
cycle. Consider the following cases:
\begin{enumerate}
\item[(1)] Suppose that a selected edge belongs to all Hamiltonian
cycles. Then, by the construction of one such Hamiltonian cycle,
part of the contour edges should be substituted by interior edges.
Since the combined weight of the substituted contour edges equals
0, and the total length of the Hamiltonian cycle is determined by
the weight of the selected edge, then the combined weight of the
interior edges that substitute the contour edges should also be 0.
\item[(2)] Suppose that the selected edge of weight -1 belongs to
a some but not all Hamiltonian cycles. Regardless of which
Hamiltonian cycle is constructed (with or without this edge), the
combined weight of the interior edges that substitute contour
edges in the Hamiltonian cycle will be equal to the combined
weights of those contour edges. If the selected edge belongs to
the constructed Hamiltonian cycle, then this combined weight will
be 0. If the selected edge does not belong to the constructed
Hamiltonian cycle, then the combined weight will be -1. \item[(3)]
Suppose that the selected edge of weight -1 does not belong to any
Hamiltonian cycle. Then, in the construction of a Hamiltonian
cycle, this edge should be substituted, and the combined weight of
the interior edges that are used in the Hamiltonian cycle, and the
combined weights of the contour edges that are not used in the
Hamiltonian cycle, will be -1. \item[(4)] Suppose that we select a
window of weight -1. It can be considered as a fictitious edge
that does not belong to any Hamiltonian cycle, and so should be
substituted by an interior edge. In this case, the previous
argument is valid: in the construction of a Hamiltonian cycle that
does not contain the selected window, the combined weights of the
interior edges that are used in the Hamiltonian cycle, and the
combined weights of the contour edges that are not used in the
Hamiltonian cycle, will be -1. \item[(5)] The claims (1)--(4) are
valid in the case of selection of any contour edge or window in
either basic object. However, one can select a set of contour
edges and/or windows. Since in the construction of a Hamiltonian
cycle, relative to any selected contour edge or window, the
combined weight of the substituting interior edges is equal to the
combined weight of the contour edges that are being substituted,
this property will be valid for any number of simultaneously
selected contour edges and windows.\end{enumerate}

Hence, the common argument is the following:
\begin{enumerate}\item In the construction of a Hamiltonian
cycle, the combined weight of substituted contour edges and
windows is equal to the combined weight of the substituting
interior edges, and this effect does not depend on whether
selected edges belong to the Hamiltonian cycle. \item If we
simultaneously select a set of contour edges and windows then the
combined weights of the substituting interior edges, and this
effect does not depend on whether selected edges belong to the
Hamiltonian cycle.\end{enumerate}

We note that the assigning of weights in turn assigns weights to
the edges of the basic objects. However, it is also convenient to
assign weights to the nodes of the basic objects. If a node is
assigned -0.5 or +0.5, then it is means that edges incident to
this node gain the weight -0.5 or +0.5.

\subsubsection{Selection of a group of edges}

We can select not just a single edge, but a group of edges.
Suppose that two basic objects of the same graph are constructed.
The contours of both basic objects contain common contour edges.
Each edge of this group could or could not belong to a particular
Hamiltonian cycle, if any exist. Now we solve the problem of
selection of common contour edges. By means of this subproblem, we
will define some characteristics of the basic objects that will
determine the Hamiltonicity of the graph.

The procedure of selection of common contour edges is as follows.
The task is to select common contour edges of basic objects under
the following conditions:
\begin{enumerate}
\item The weights of the nodes incident to common contour edges in
each basic object should be identical. \item The weights of the
common contour edges could be -1, 0 or 1. \item The interior edges
of both basic objects should have 0 weights. If this is
impossible, the correction procedure is performed (see Subsection
3.2.3) and then the combined weight of interior edges should be 0
for each object.\end{enumerate}
\begin{remark}We can formulate a dual to the subproblem of selecting
common contour edges of the basic objects. It differs from the
above problem by the condition that the contour edges that are not
common should all have zero weight.\end{remark}

We can request, in both primal and dual subproblems, that weights
of the nodes incident to common contour edges should be identical,
not only in both basic objects, but in both subproblems as well.
In this case, the difference in the solution of this subproblem
will be in assigning weights to other nodes that are not incident
to common contour edges. In the primal subproblem, weights of
those nodes should be assigned in such a way that all interior
edges have zero weights. In the dual subproblem, the contour edges
that are not common for both basic objects should have 0 weight.
These two can be simultaneously achieved. Furthermore, we show
that the solution of each of these subproblems can be used to
solve HCP.

\subsection{Assigning of weights to edges of the objects}
\subsubsection{Selection of common edges of contours of the
objects}

\begin{figure}
\centering
\includegraphics[width=0.85\textwidth]{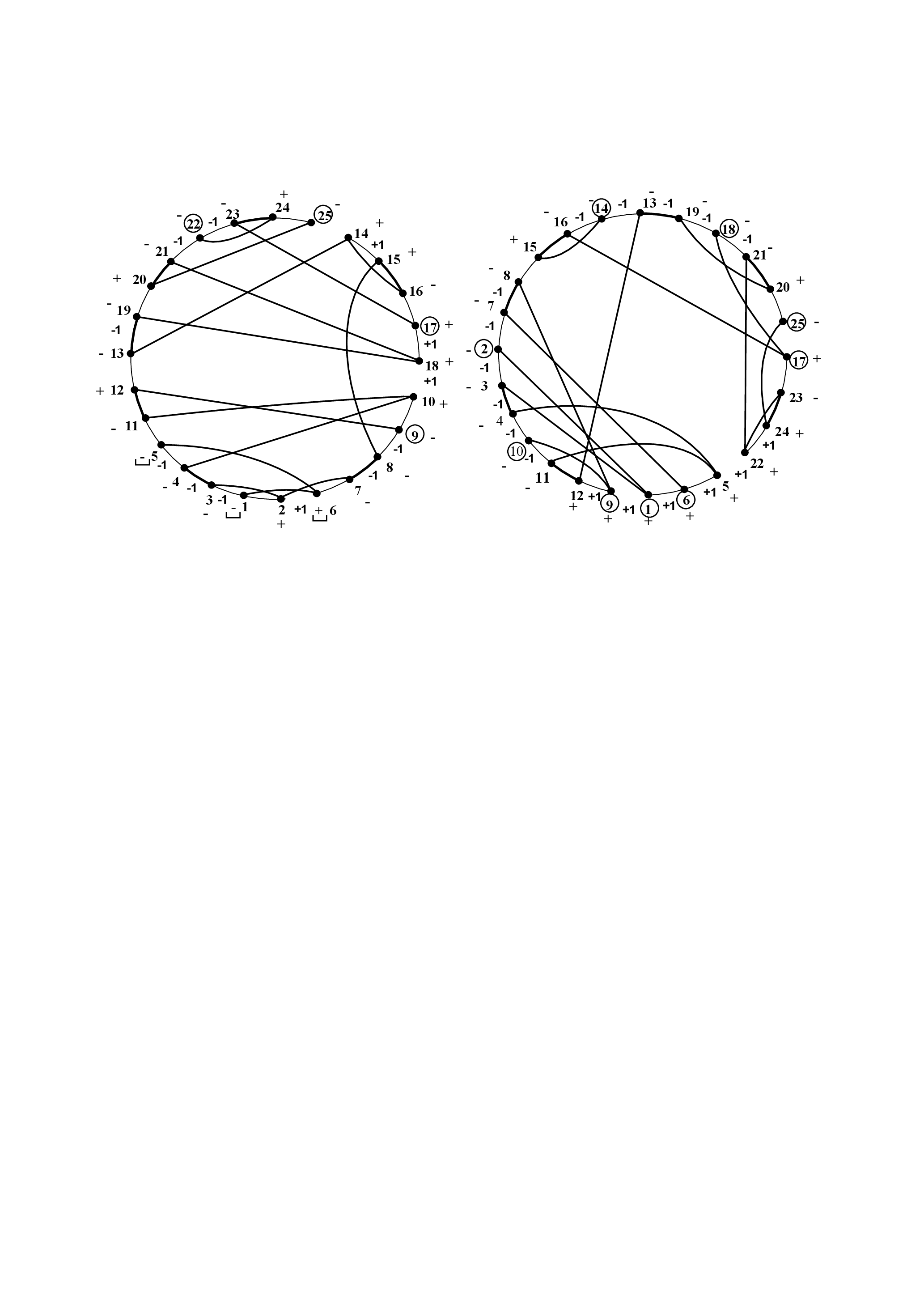}
\caption{Two basic objects, where the weights of the nodes are
displayed next to the node numbers. The nonzero weights of the
edges is displayed next to the edges.\label{fig3-1}}
\end{figure}
\begin{figure}
\centering
\includegraphics[width=0.4\textwidth]{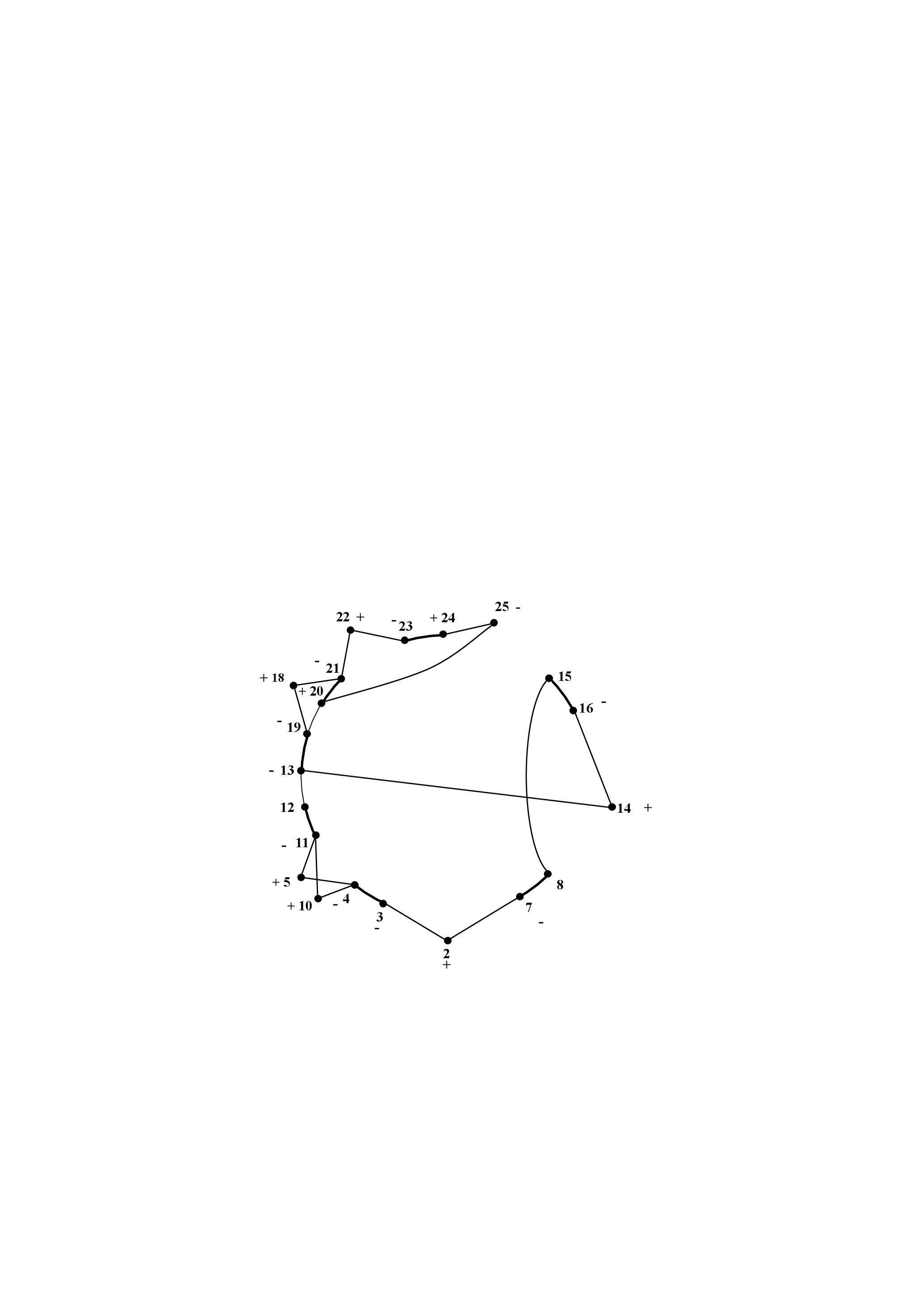}
\caption{Selected subgraph that determines the weight of the nodes
incident to common contour edges.\label{fig3-2}}
\end{figure}

Assigning of weights to edges of the objects is a part of the
solution to HCP. First, we will consider examples of assigning
weights to contour edges and interior edges that satisfy the
required conditions. Also, we will determine the sequence of such
assignment algorithm.

\begin{enumerate}\item[(1)] Determine the set of contour edges for each
object. The set of common contour edges is determined by comparing
the two sets of contour edges together. \item[(2)] One of the
conditions of assigning weights to the edges implies that the
nodes of the common contour edges should have identical weights in
both objects. For this reason, these nodes, and the contour edges
between them, should be selected as a separate subgraph common to
both objects.\end{enumerate}

On the basis of two objects, select a subgraph that contains:
\begin{enumerate}\item Common contour edges. \item Edges of each
object that link two common contour edges. In both objects, they
are the same. \item Interior edges that are incident to window
nodes. These sets will be disjoint for each object.
\end{enumerate}

Consider the following example of the selection of a subgraph for
the two objects displayed in Figure 3.1.
\begin{example}
The subgraph (see Figure 3.2) consists of the following edges:
\begin{enumerate}\item Edges $e(3-4)$, $e(11-12)$, $e(13-19)$,
$e(20-21)$, $e(23-24)$, $e(15-16)$, $e(7-8)$ are common contour
edges. \item Interior edges $e(12-13)$ and $e(19-20$ that link
common contour edges. \item Pairs of edges that in each object
incident are incident to the window nodes. In the first object
they are $e(18-19)$, $e(18-21)$; $e(10-11)$, $e(10-4)$; $e(2-3)$,
$e(2-7)$; $e(14-13)$, $e(14-16)$. In the second object they are
$e(20-21)$, $e(20-23)$; $e(5-11)$, $e(5-4)$. For the node $25$
having $d=2$ edges $e(25-20)$, $e(25-24)$.
\end{enumerate}
\end{example}

\begin{enumerate}\item[(3)] For the nodes of the selected
subgraph, we need to assign the weights. This algorithm has to
satisfy a number of conditions, as follows.
\begin{enumerate}\item[(3.1)] The nodes that are incident to common
contour edges should have equal weight in both objects. Because
the subgraph is common in both objects, then the weights assigned
to its edges will be the same in both objects. \item[(3.2)] The
weight of the common edges can be -1, 0 or +1. \item[(3.3)] The
edges of the subgraph that link nodes incident to common contour
edges should have 0 weight. \item[(3.4)] The pairs of edges that
are adjacent to a window node of degree 3, that in one object are
interior edges, should have 0 weights. In the other object, this
pair of edges will be contour edges and the weight of these edges
is not yet determined, and could be -1, 0 or +1 depending on other
weight assignments.\end{enumerate}\end{enumerate}
\begin{figure}
\centering
\includegraphics[width=0.2\textwidth]{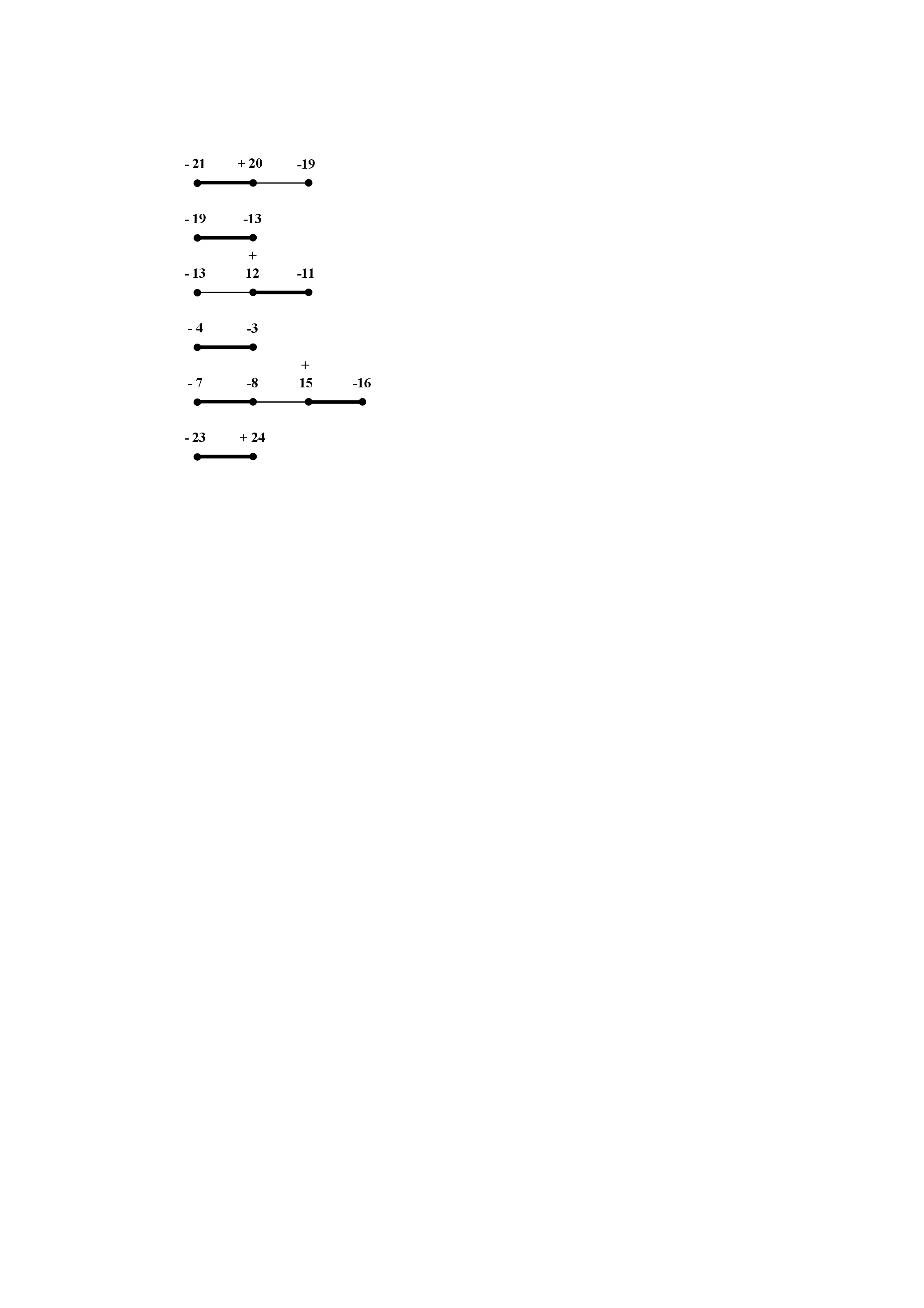}
\caption{Selected islands of the subgraph that determine the
weights of the nodes.\label{fig3-3}}
\end{figure}
\begin{figure}
\centering
\includegraphics[width=0.25\textwidth]{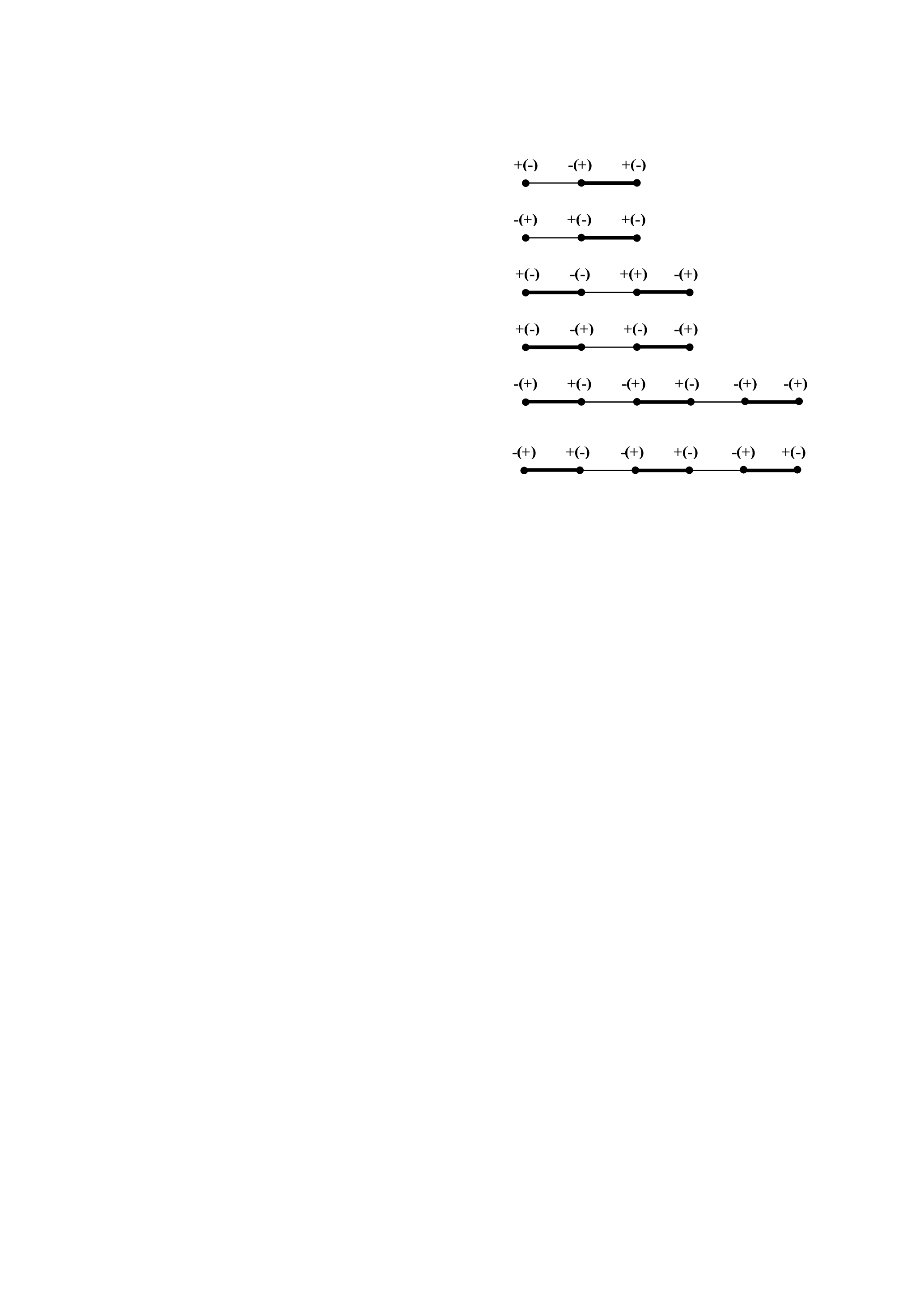}
\caption{The set of chains that determines the possibilities of
assigning weights to the nodes on the islands.\label{fig3-4}}
\end{figure}

\begin{example}Consider an example of weight assignment for the
subgraph shown in Figure 3.2. \begin{enumerate}\item[(A)] To all
window nodes, we assign the weight +0.5. In Figure 3.2 (and other
figures) we represent this assignment only by displaying the sign
of the weight. These nodes are nodes 2, 10, 18, 14, 22, 5 and 25.
In general, we can assign to these nodes the weight +0.5 or -0.5.
If there is a node in the subgraph linked to two window nodes,
then the two window nodes should be assigned the same weight,
either +0.5 or -0.5 simultaneously. In this example, the weights
for nodes 2, 14 and 25 can be assigned independently. The weights
of nodes 18 and 22, however, must be the same as each other. This
is because edges $e(18-21)$ and $e(22-21)$ are incident to node
21, that is incident to the common contour edge $e(20-21)$. This
is determined by the fact that two edges $e(18-21)$ and $e(22-21)$
are interior edges (one edge is interior in one object, and the
other edge is interior in the other object), and should therefore
have 0 weight. An analogous argument is valid for nodes 10 and 5,
for which the weight should therefore be equal. \item[(B)] Because
the weight of the edges incident to window nodes 2, 10, 18, 14,
22, 5 and 25 should be equal to 0, then nodes 23, 21, 19, 13, 11,
4, 3, 7, 16, 20, 24 have weight -0.5. Since edge $e(20-19)$ is
interior in one of the basic objects, and has weight -1, then we
need to reassign the weight of one of the windows nodes from +0.5
to -0.5. Nodes 19 and 20 that are incident to this node in both
object have interior edges that link them to window nodes (18 and
25). The weight of one of these nodes should be changed from +0.5
to -0.5. We assign the weight -0.5 to node 25 having $d=2$. Then,
nodes 20 and 24 should be assigned the weight
+0.5.\end{enumerate}\end{example}

\begin{enumerate}\item[(4)] In the subgraph, we select islands that are parts
of the subgraph such that their extreme (endpoint) nodes have
weights -0.5, or the weight is unassigned. An island can not
contain pairs of edges that are incident to window nodes. For this
example, the islands are displayed in Figure 3.3. In assigning
weights to nodes of an island, one can use patterns displayed in
the example of assigning weights to the nodes of chains, including
different number of common edges. If the subgraph contains chains
with common contour edges (see Figure 3.4), then the extreme nodes
$a$ and $b$ of the chains can have any weights associated with the
choice of the weight of common edges, in particular, -0.5. In
Figure 3.4, we display examples of assigning weights to the nodes
of chains containing one, two or three common edges. By variation
of the choice of weight for a particular edge, with any number of
common edges in the chain, the extreme nodes can be assigned equal
or opposite weights. We note that the situation when the extreme
nodes are assigned opposite weights remains unchanged for any
number of common edges in the chain. Furthermore, we need to
assign weights to nodes of the islands that can have only one
value. Such is node 12 whose weight can only be +0.5. We assign
weights to chain $7-8-15-16$ (see Figure 3.3). They are assigned
according to the rule displayed in Figure 3.4. If, to node 8, we
assign the weight +0.5, then to node 15 we should assign the
weight -0.5, and vice versa.\end{enumerate}

This completes the algorithm of selection of common contour edges,
and assigning weights to the corresponding nodes. As a result of
this algorithm, the nodes that are incident to common contour
edges are assigned identical weights in both objects. Also, the
edges of the subgraph that belong to three selected sets are
assigned admissible weights. In Figures 3.1, 3.2 and 3.3, the
signs of the weights are displayed next to the nodes.

\subsubsection{Assignment of weights to the nodes of the
objects}

As a result of the selection of common contour edges, some but not
all nodes of the objects are assigned weights. The above algorithm
allows us to assign weights to the nodes incident to common
contour edges, and also to the window nodes. The next problem is
to assign weights to the nodes that have not yet been assigned a
weight. The missing weights are assigned according to the
condition that interior edges must have 0 weights, and are
therefore determined by the weights assigned already to other
nodes in the above algorithm. In Figure 3.1, the nodes whose
weights are assigned at this step are circled. We note that
objects may contain nodes whose weights can not be assigned using
the above conditions. For example, in Figure 3.1, nodes 1, 6 and 5
are not assigned weights. This is because nodes 1 and 5 are not
incident to common contour edges. The choice of weight of a window
node (in this example, node 6) is made on the basis of comparison
of the sums of the weights of both objects, and is considered
later. The weight of any edge is defined as the sum of the weights
of the nodes incident to this edge. In Figure 3.1, the zero
weights are not displayed.

\subsubsection{Algorithm for assignment of weights to the edges of the objects}

\begin{enumerate}\item[(1)] Determine the contour edges for each
object. \item[(2)] On the basis of two objects, select a subgraph
that contains:
\begin{enumerate}\item Common contour edges. \item Edges of each
object that link two common contour edges. In both objects, they
are the same. \item The sets of interior edges that are incident
to window nodes. These sets of disjoint for each
object.\end{enumerate} \item[(3)] Assign weights to the nodes of
this subgraph using the algorithm outlined in Subsection 3.2.1.
\item[(4)] Identify the islands in the above subgraph and assign
weights to the nodes contained in the islands using the algorithm
also outlined in Subsection 3.2.1. \item[(5)] Assign weights to
remaining nodes using the algorithm outlined in Subsection 3.2.2.
\item[(6)] If in one or both objects there remains nodes without
weights assigned, then we assign to those nodes weights according
to the rules will be described later. \item[(7)] Determine the
weight of the edges as the sum of the weights of the nodes
incident to this edge. \item[(8)] If, among the interior edges of
one or both objects, there appears a nonzero weight, then we apply
the following correction algorithm for this object.\end{enumerate}

{\em Correction algorithm}
\begin{figure}
\centering
\includegraphics[width=0.85\textwidth]{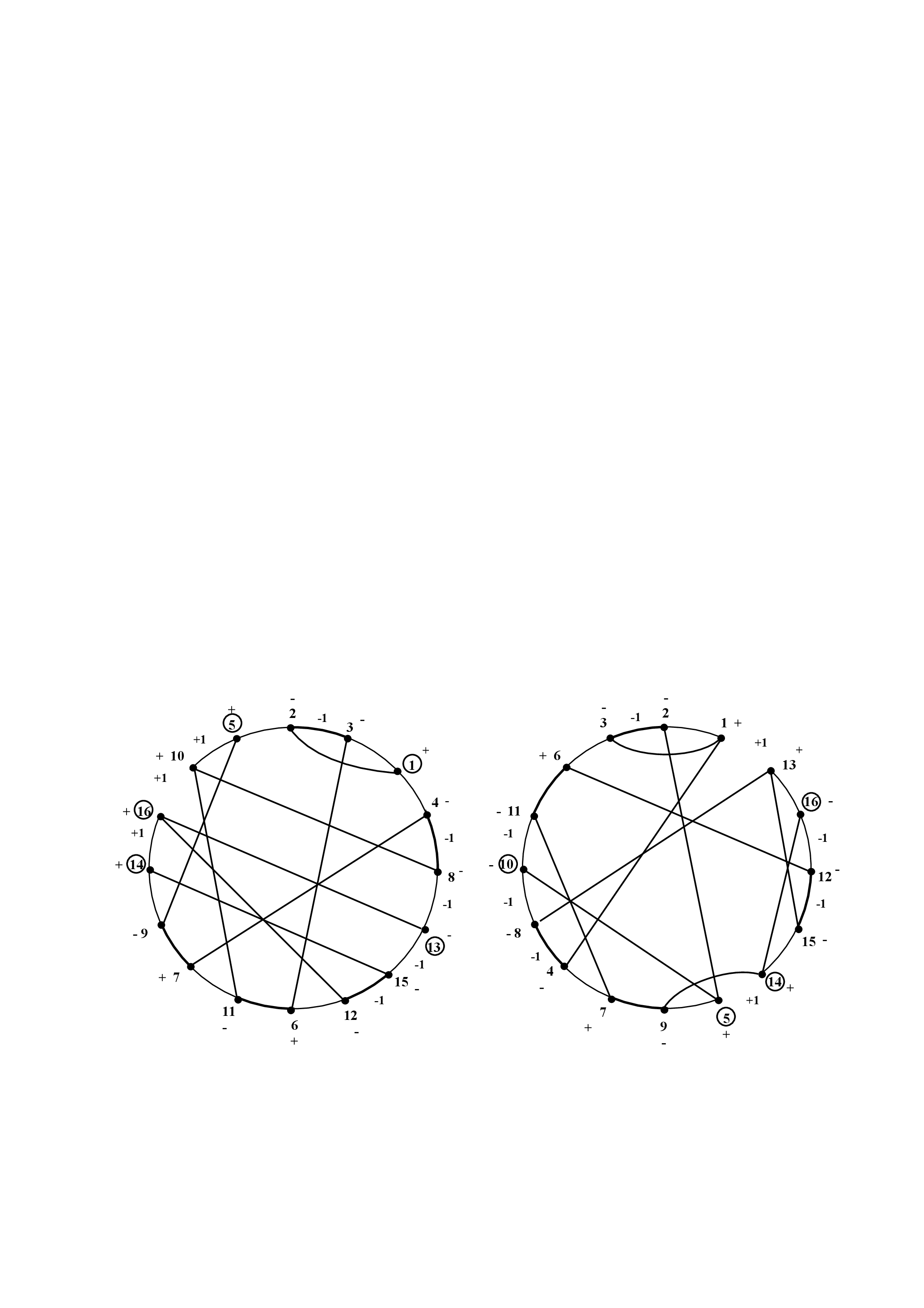}
\caption{Two basic objects, where the weights of the nodes are
displayed next to the node numbers. The nonzero weights of the
edges is displayed next to the edges.\label{fig3-5}}
\end{figure}

\begin{enumerate}\item[(1)] Determine the sum of the weights of
interior edges for both objects. Suppose that the sum is $-\gamma$
($+\gamma$). The number $\gamma$ can only be integer (-1, 0 or +1)
because the weight of each edge is determined by the sum of the
weights of its adjacent nodes. \item[(2)] Change the weight of the
edges incident to $\gamma$ nodes by +1 (-1) in such a way that the
sum of interior edges becomes 0. These edges cannot be
\begin{enumerate}\item Window nodes. \item Nodes incident to a
common contour edges. \item Nodes with degree $d =
2$.\end{enumerate}\end{enumerate}
\begin{figure}
\centering
\includegraphics[width=0.4\textwidth]{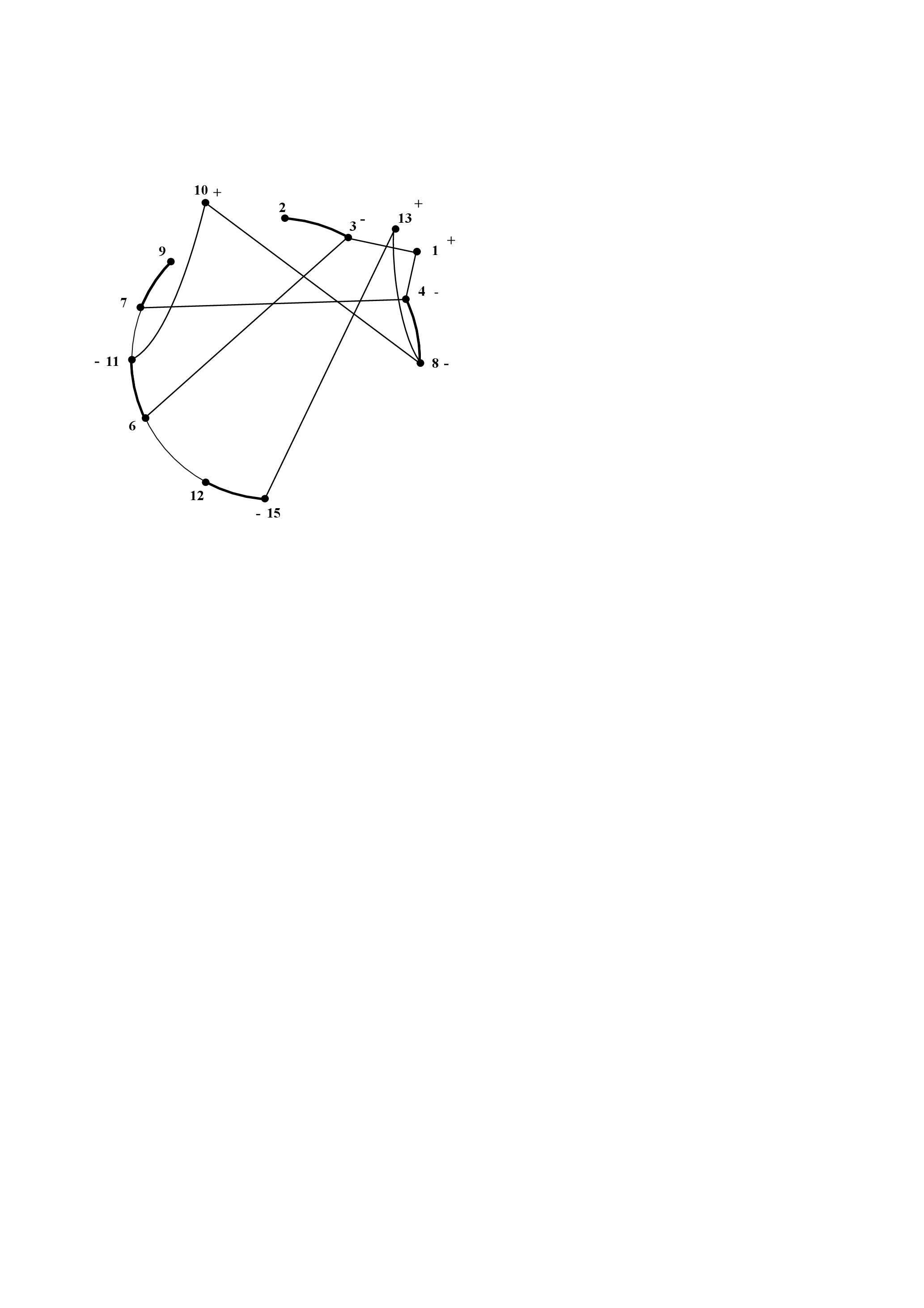}
\caption{Selected subgraph that determines the weight of the nodes
incident to common contour edges.\label{fig3-6}}
\end{figure}
\begin{figure}
\centering
\includegraphics[width=0.2\textwidth]{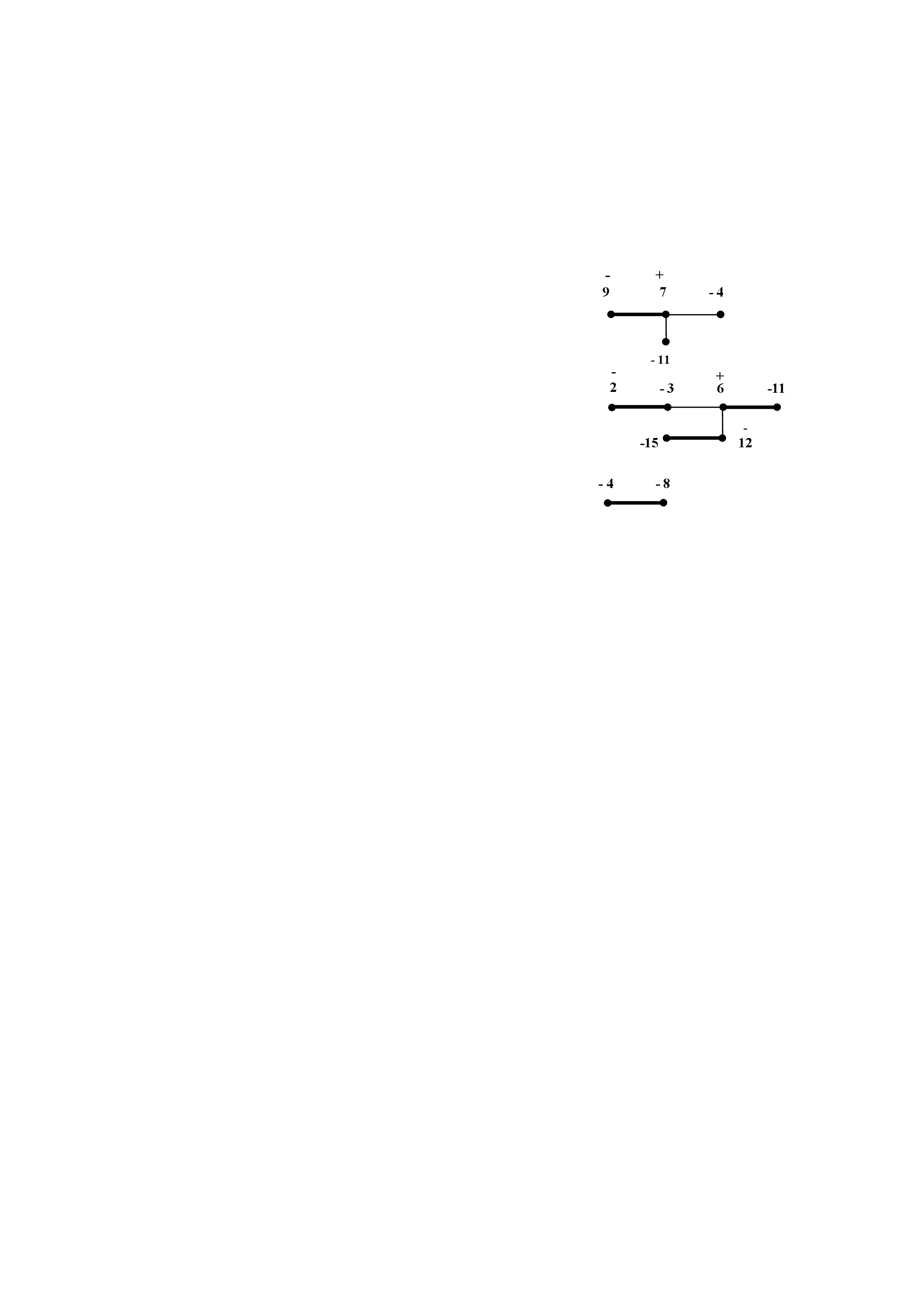}
\caption{Selected islands of the subgraph that determine the
weights of the nodes.\label{fig3-7}}
\end{figure}

At this stage, the general algorithm of assigning weights to the
edges of the objects is complete. Next, we consider examples of
assigning weights to edges of objects.
\begin{example}In Figure 3.5, we display two basic objects that
satisfy all required conditions. Now we explain how we assign
weights to the edges of these objects.
\begin{enumerate}\item[(1)] Determine the common contour edges. In
Figure 3.5, the common contour edges are displayed in bold, and
they are $e(2-3)$, $e(4-8)$, $e(12-15)$, $e(6-11)$, $e(7-9)$.
\item[(2)] On the basis of these two objects, select a subgraph
that contains:
\begin{enumerate}\item Common contour edges. \item Edges
$e(3-6)$, $e(4-7)$, $e(6-12)$, $e(7-11)$ that link nodes incident
to common contour edges. \item Interior edges incident to the
window nodes. These are $e(10-11)$ and $e(10-8)$ from the first
object, and $e(1-3)$, $e(1-4)$; $e(13-15)$, $e(13-18)$ from the
second object.\end{enumerate} \item[(3)] Assign weights to the
nodes of the selected subgraph.
\begin{enumerate}\item[(A)] To window nodes 10, 1, 13, assign
weight +0.5. If there is a node in the subgraph linked to two
window nodes, then these window nodes must be assigned the same
weight +0.5 (nodes 10 and 13). \item[(B)] To nodes 11, 3, 4, 8,
15, that are linked with window nodes by a single edge, assign the
weight -0.5.\end{enumerate}\item[(4)] In the subgraph, select the
islands. The pairs of edges incident to a window node can not be
part of any island. Assign weights to the nodes in these islands.
First, determine the nodes whose weights are predetermined by
previous weight assignments. In this example, nodes 7 and 6 can
only have weight +0.5. This choice is determined by the condition
that edges $e(7-11)$ and $e(3-6)$ must have 0 weight. Then, the
weight of node 12 can only be -0.5. The choice of weights for
nodes 9 and 2 can be taken arbitrarily. In our example, both edges
are assigned the weight -0.5. In Figures 3.5, 3.6 and 3.7, the
weights are displayed next to the nodes. \item[(5)] Assign weights
to the nodes that have not been assigned a weight in the previous
steps. They are assigned in accordance with the condition that the
weights of interior edges should be 0. The final assignment of
weights is displayed in Figure 3.5, where the weights of edges are
also displayed. The nodes for which the weights are determined
during this stage are circled.\end{enumerate}\end{example}
\begin{figure}
\centering
\includegraphics[width=0.85\textwidth]{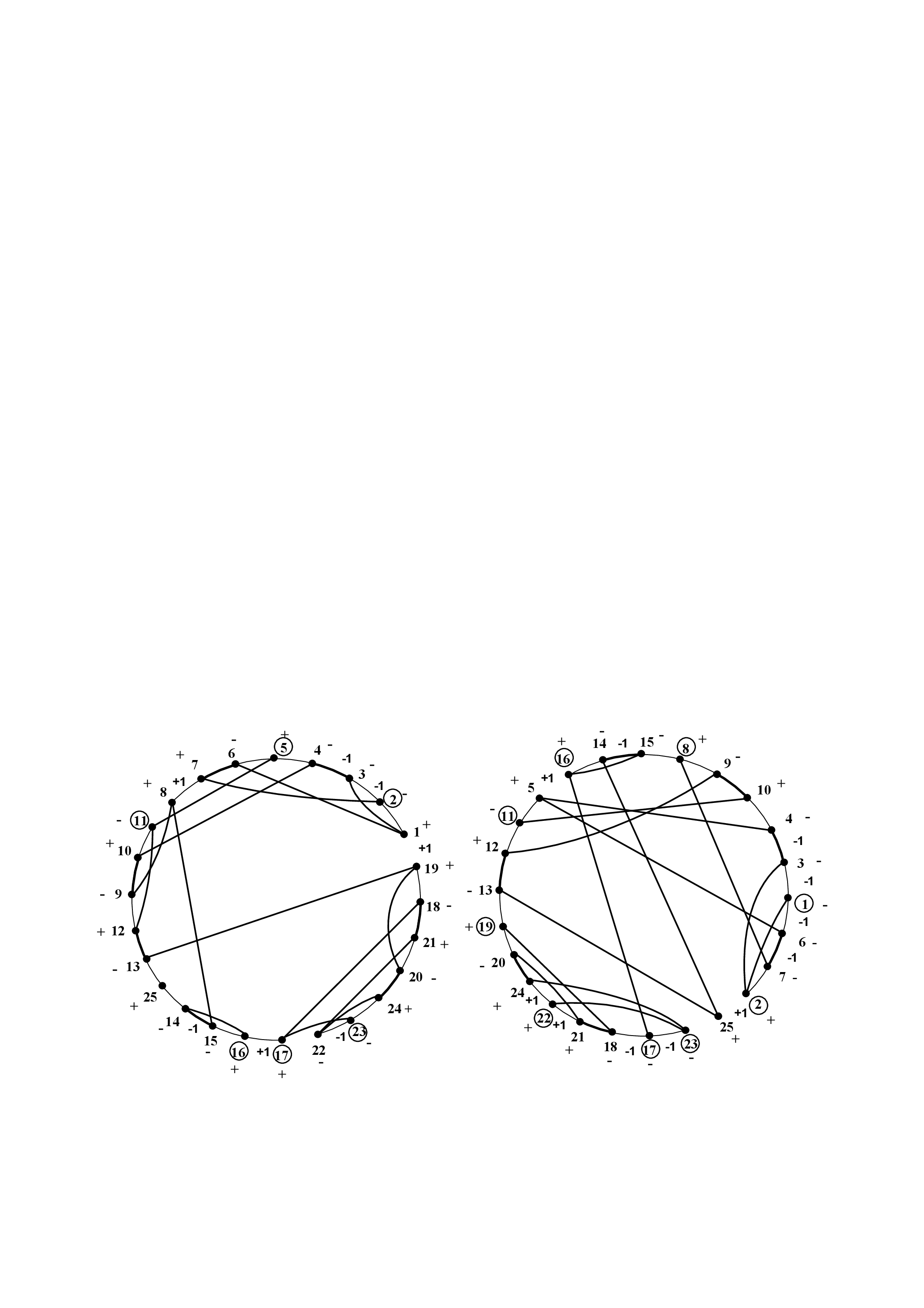}
\caption{Two basic objects, where the weights of the nodes are
displayed next to the node numbers. The nonzero weights of the
edges is displayed next to the edges.\label{fig3-8}}
\end{figure}
\begin{figure}
\centering
\includegraphics[width=0.4\textwidth]{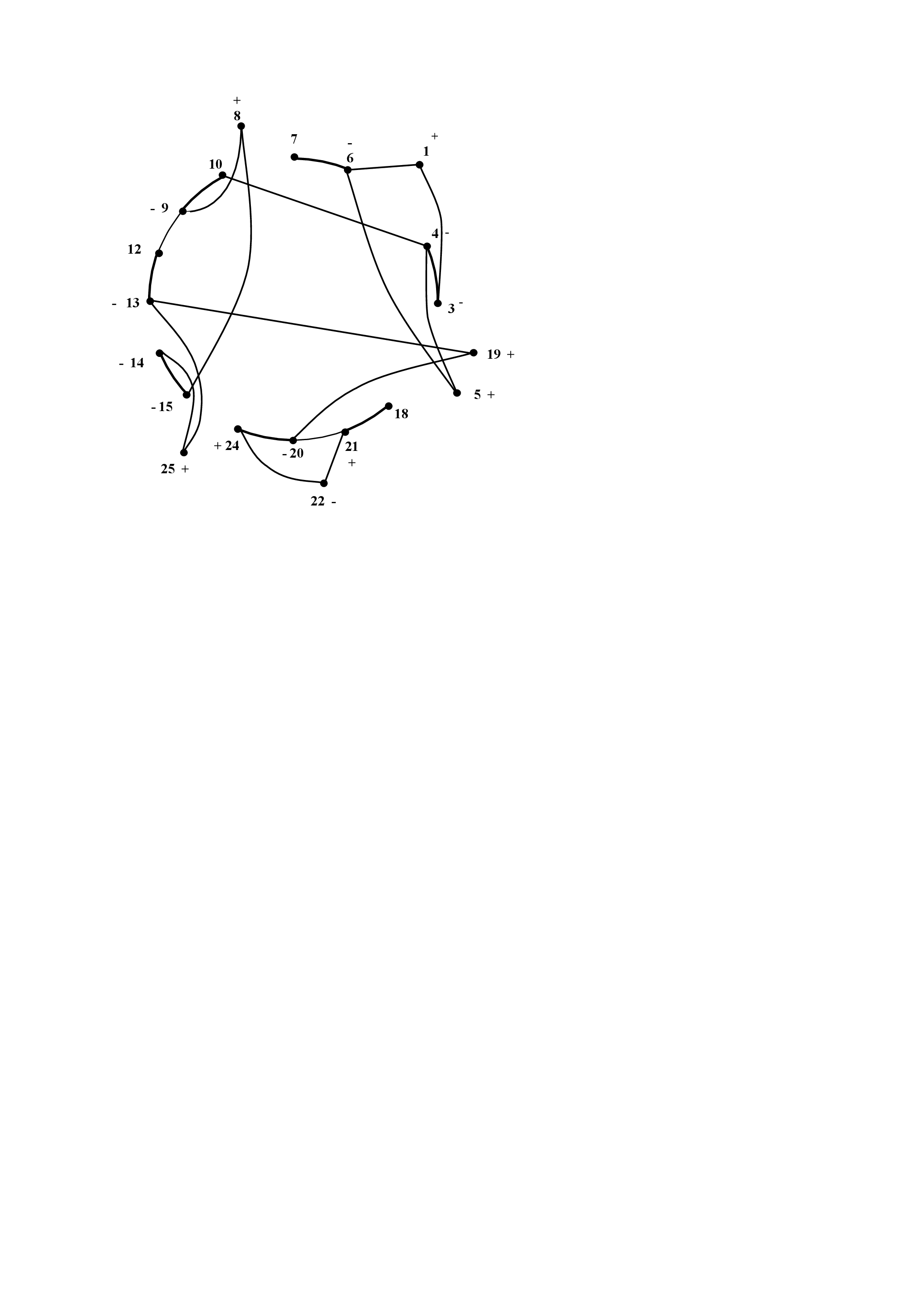}
\caption{Selected subgraph that determines the weight of the nodes
incident to common contour edges.\label{fig3-9}}
\end{figure}

\begin{example}In Figure 3.8, two basic objects satisfying the required conditions are
displayed. We assign weights to the edges of these objects.

\begin{enumerate}\item[(1)] Determine the common contour edges. In
Figure 3.8, the common contour edges are displayed in bold, and
they are $e(3-4)$, $e(6-7)$, $e(9-10)$, $e(12-13)$, $e(14-15)$,
$e(20-24)$, $e(18-21)$. \item[(2)] On the basis of these two
objects, select a subgraph (see Figure 3.9) that contains:
\begin{enumerate}\item Common contour edges. \item Edges
$e(9-12)$, $e(20-21)$, $e(4-10)$ that link nodes incident to
common contour edges. \item Interior edges incident to the window
nodes. These are $e(8-9)$ and $e(8-15)$; $e(1-3)$, $e(1-6)$;
$e(19-13)$, $e(19-20)$; $e(22-21)$, $e(22-24)$ from the first
object, and $e(25-13)$, $e(25-14)$; $e(5-4)$, $e(5-6)$ from the
second object.\end{enumerate} \item[(3)] Assign weights to the
nodes of the selected subgraph.
\begin{enumerate}\item[(A)] To window nodes 8, 1, 19, 5, 22, 25, assign
weight +0.5. \item[(B)] To nodes 13, 9 , 4, 6, 3, 21, 20, 24, 14,
15, that are linked with window nodes by a single edge, assign the
weight -0.5. Since edge $e(20-21)$ is interior in one of the basic
objects, and has weight -1, then we need to reassign the weight of
one of the windows nodes from +0.5 to -0.5. Nodes 20 and 21 that
are incident to this node in both object have interior edges that
link them to window nodes (19 and 22). The weight of one of these
nodes should be changed from +0.5 to -0.5. We assign the weight
-0.5 to node 22. Then, nodes 21 and 24 should be assigned the
weight +0.5.\end{enumerate}\item[(4)] In the subgraph, select the
islands. The pairs of edges incident to a window node can not be
part of any island. Assign weights to the nodes in these islands.
First, determine the nodes whose weights are predetermined by
previous weight assignments. In this example, nodes 10 and 12 can
only have weight +0.5. This choice is determined by the condition
that edges $e(9-12)$ and $e(4-10)$ must have 0 weight. Then, the
weight of nodes 7 and 18 can be taken arbitrarily. In our example,
both edges are assigned the weight -0.5. In Figures 3.8, 3.9 and
3.10, the weights are displayed next to the nodes. \item[(5)]
Assign weights to the nodes that have not been assigned a weight
in the previous steps. They are assigned in accordance with the
condition that the weights of interior edges should be 0. The
final assignment of weights is displayed in Figure 3.8, where the
weights of edges are also displayed. The nodes for which the
weights are determined during this stage are
circled.\end{enumerate}\end{example}
\begin{figure}
\centering
\includegraphics[width=0.2\textwidth]{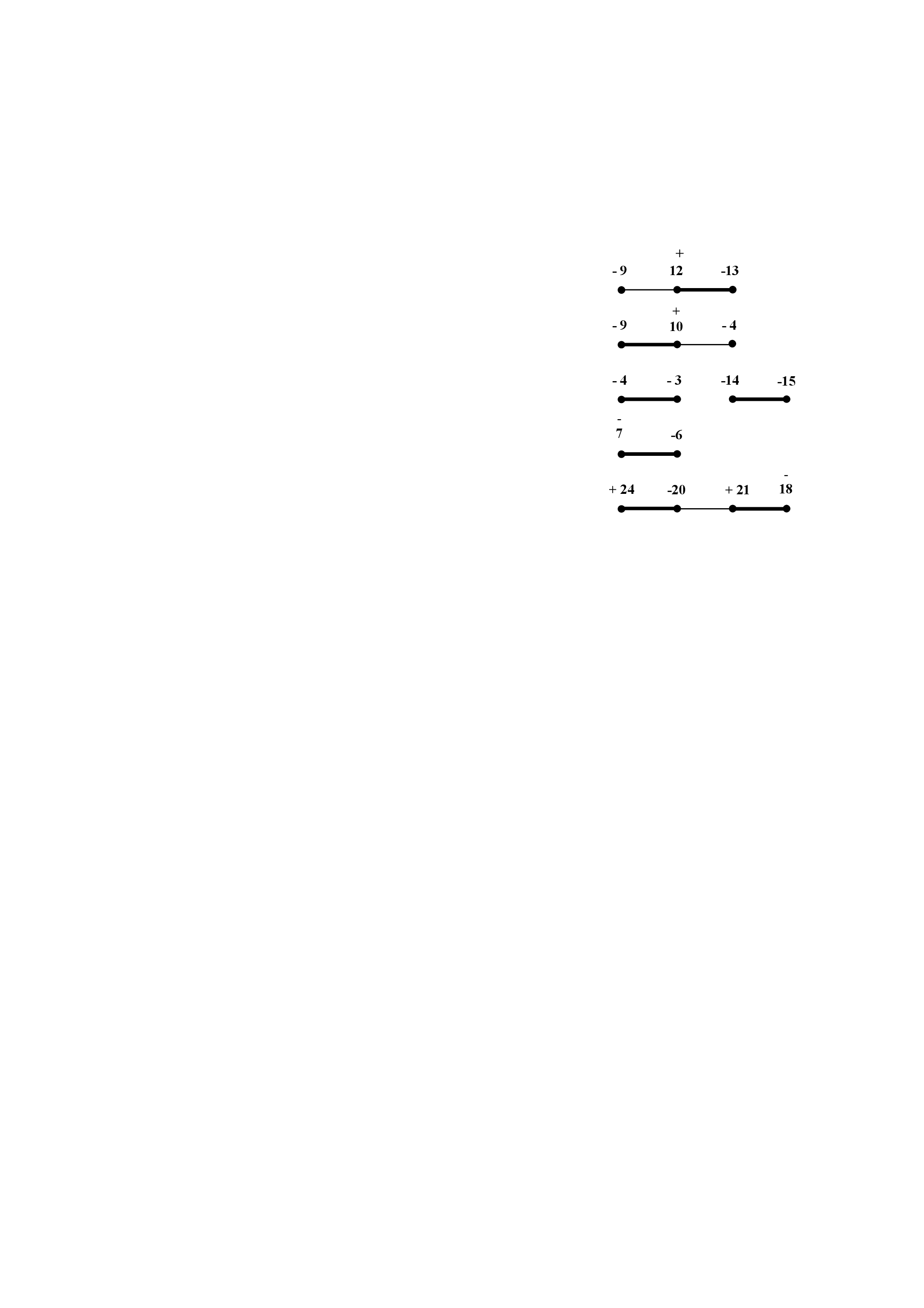}
\caption{Two basic objects, where the weights of the nodes are
displayed next to the node numbers. The nonzero weights of the
edges is displayed next to the edges.\label{fig3-10}}
\end{figure}
\begin{figure}
\centering
\includegraphics[width=0.85\textwidth]{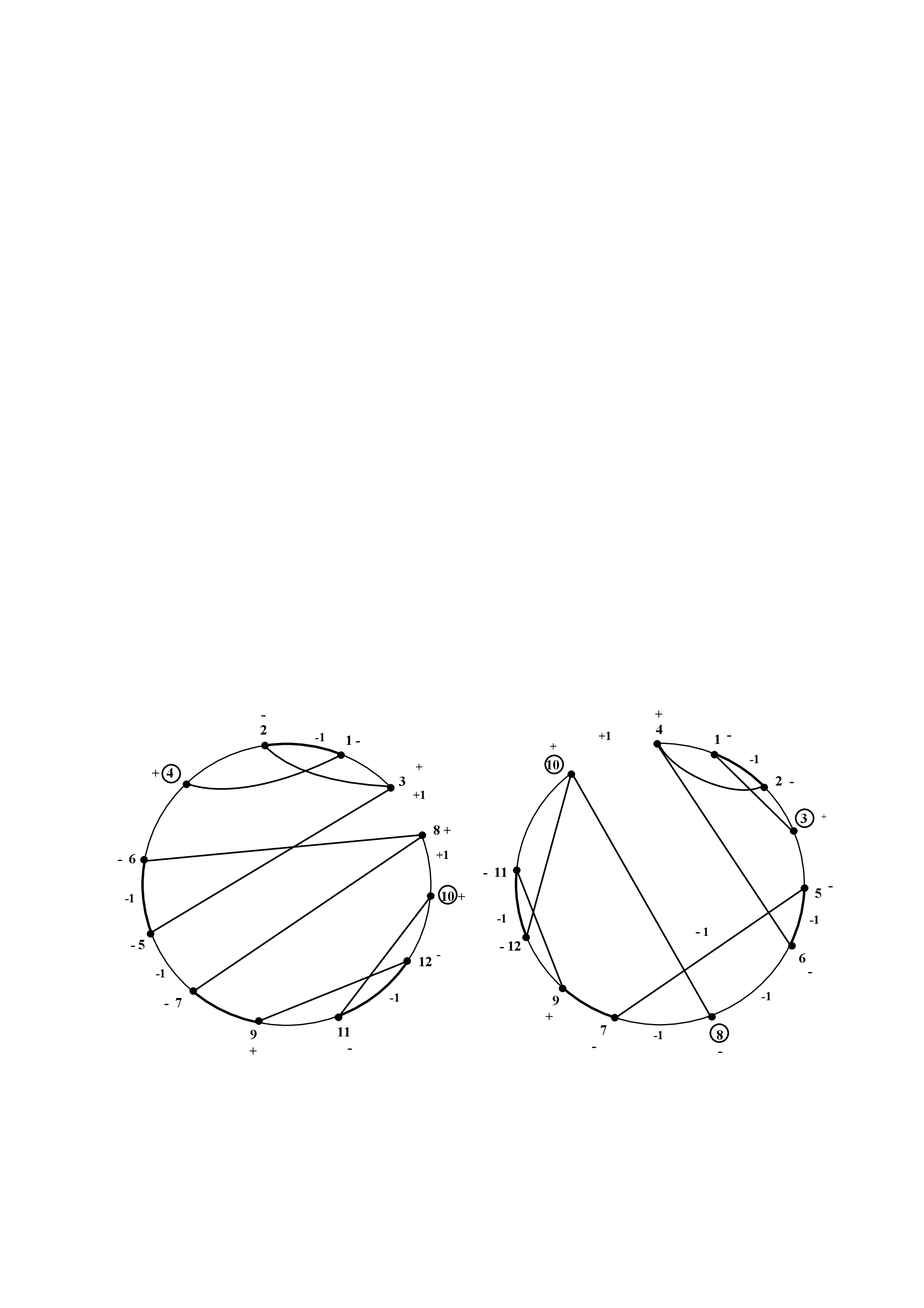}
\caption{Selected subgraph that determines the weight of the nodes
incident to common contour edges.\label{fig3-11}}
\end{figure}

\begin{example}In Figure 3.11, two basic objects satisfying the required conditions are
displayed. We assign weights to the edges of these objects.

\begin{enumerate}\item[(1)] Determine the common contour edges. In
Figure 3.11, the common contour edges are displayed in bold, and
they are $e(1-2)$, $e(5-6)$, $e(7-9)$, $e(11-12)$. \item[(2)] On
the basis of these two objects, select a subgraph (see Figure
3.12) that contains:
\begin{enumerate}\item Common contour edges. \item Edges
$e(5-7)$, $e(9-11)$, $e(9-12)$ that link nodes incident to common
contour edges. \item Interior edges incident to the window nodes.
These are $e(3-5)$ and $e(3-2)$; $e(8-6)$, $e(8-7)$; from the
first object, and $e(4-2)$, $e(4-6)$ from the second
object.\end{enumerate} \item[(3)] Assign weights to the nodes of
the selected subgraph.
\begin{enumerate}\item[(A)] In the subgraph there are nodes 2 and 6, that are
linked to two window nodes. Node 6 is linked to window nodes 4 and
8, and node 2 is linked to window nodes 3 and 4. In this case, all
window nodes 3, 4, 8 are assigned a weight +0.5. \item[(B)] Nodes
2, 6, 5, 7 that are linked to the window nodes, are assigned a
weight -0.5.\end{enumerate}\item[(4)] In the subgraph, select the
islands (see Figure 3.13). The pairs of edges incident to a window
node can not be part of any island. Assign weights to the nodes in
these islands. First, determine the nodes whose weights are
predetermined by previous weight assignments. Because one of the
islands contains a link between nodes 9 and 12, then these nodes
should be assigned opposite weights. Assign weight +0.5 to node 9,
and weight -0.5 to node 12. Then, the weight of node 11 must be
-0.5. In Figures 3.11, 3.12 and 3.13, the weights are displayed
next to the nodes. \item[(5)] Assign weights to the nodes that
have not been assigned a weight in the previous steps. They are
assigned in accordance with the condition that the weights of
interior edges should be 0. The final assignment of weights is
displayed in Figure 3.11, where the weights of edges are also
displayed. The nodes for which the weights are determined during
this stage are circled.\end{enumerate}\end{example}
\begin{figure}
\centering
\includegraphics[width=0.4\textwidth]{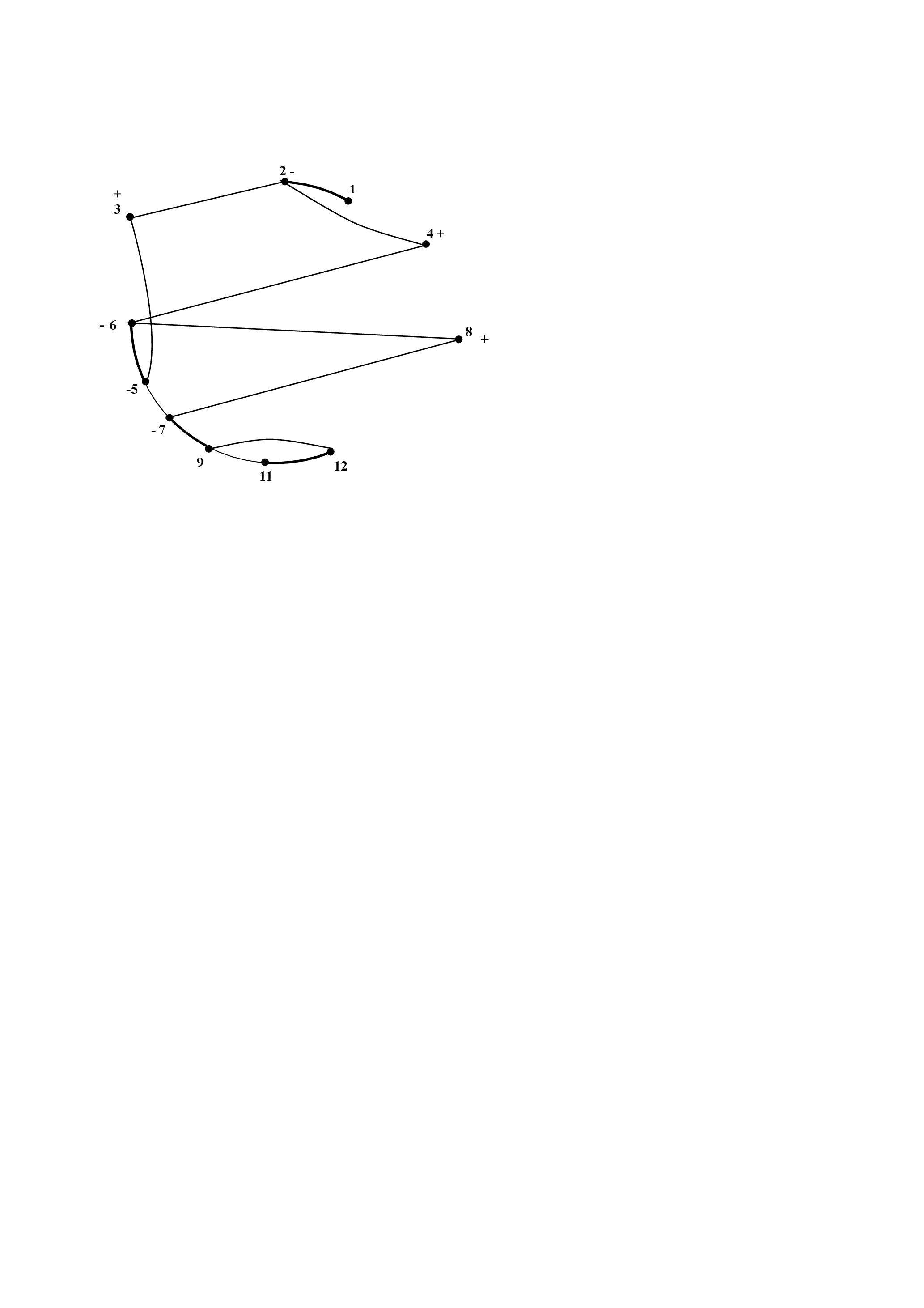}
\caption{Selected subgraph that determines the weight of the nodes
incident to common contour edges.\label{fig3-12}}
\end{figure}
\begin{figure}
\centering
\includegraphics[width=0.25\textwidth]{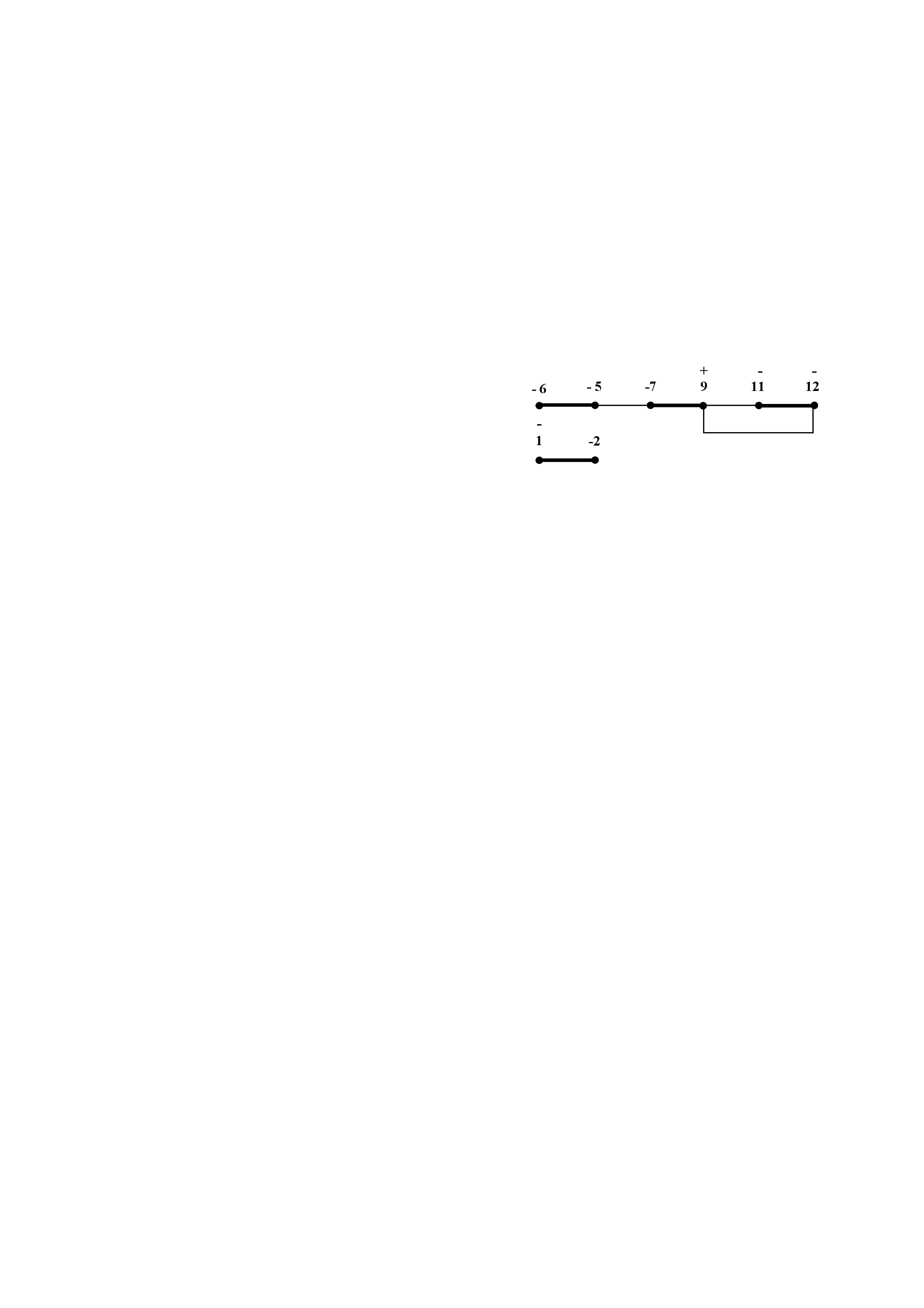}
\caption{Selected islands of the subgraph that determine the
weights of the nodes.\label{fig3-13}}
\end{figure}

\begin{remark}As follows from Figure 3.11, edge $e(5-7)$ that links
nodes incident to common contour edges $e(6-5)$ and $e(7-9)$ is
assigned a weight -1. In the second object, edge $e(5-7)$ is an
interior edge. This weight assignment contradicts the condition
that interior edges must have zero weight. After performing the
correction algorithm, which we now outline, we will still be able
to perform the test for Hamiltonicity of the graph. The underlying
principal of the correction algorithm will be that in the basic
objects, the sum of weights of interior edges must be 0. We need
to increase or decrease the weight of the edges incident to a node
of an object that contains an interior edge of nonzero weight.
This node can not be a window node (in this case, node 4 or 10),
nor nodes that are incident to common contour edges (in this case,
nodes 1, 2, 5, 6, 7, 9, 11, 12), as the weights of these node
should be the same in each object. Therefore, in this example, we
can only use nodes 3 and 8. We select node 3, and increase change
its weight from -0.5 to +0.5. This increases the weight of all
edges incident to node 3 by +1. This will ensure that the sum of
weights of the interior edges in the second object is zero. The
second object, with the corrected edge weights, is displayed in
Figure 3.14.
\begin{figure}
\centering
\includegraphics[width=0.4\textwidth]{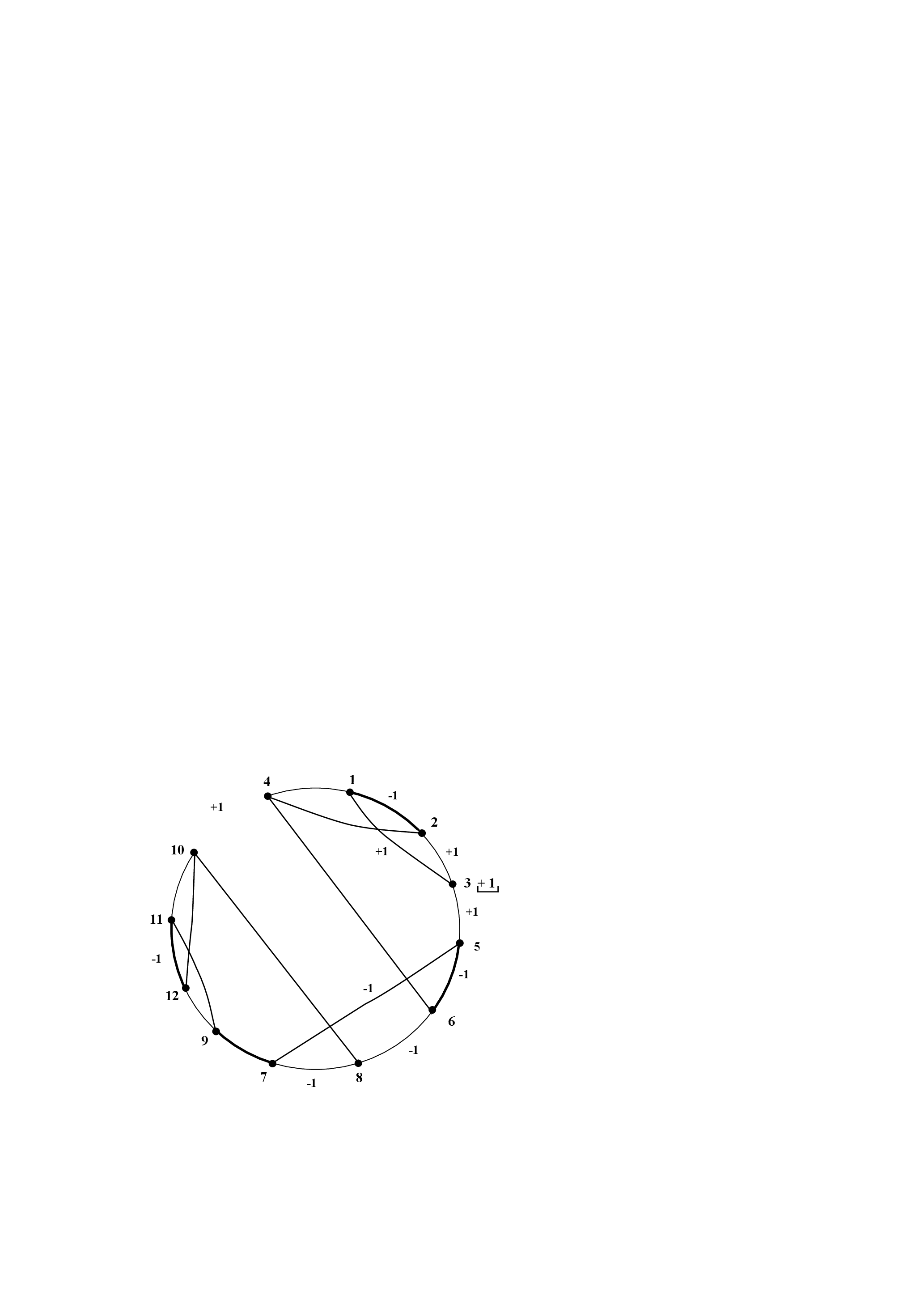}
\caption{The second object with the changed weights of edges after
the correction algorithm has been performed.\label{fig3-14}}
\end{figure}

Therefore, we can now determine the sum of weights of the contour
edges, and the sum of weights of the windows. These parameters are
necessary to determine the Hamiltonicity of the original
graph.\end{remark}

\section{Main theorem}\label{sec5}

\begin{thm}If both basic objects, after undergoing the algorithms
outlined in Section 3, have equal sums of weights of contour edges
($L_1 = L_2$), and equal sums of weights of their windows ($S_1 =
S_2$), then the original graph is Hamiltonian. If either of these
equalities are violated, the graph is not Hamiltonian.\end{thm}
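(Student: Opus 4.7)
The plan is to split the biconditional into two directions and treat them separately. For the necessity direction (Hamiltonian $\Rightarrow L_1=L_2, S_1=S_2$), I would first isolate the key invariant already argued in Subsection 3.1.1 in its strongest form: for any Hamiltonian cycle $H$ and for each basic object $G_N^{(i)}$, the total $H$-weight computed in object $i$ equals $L_i + S_i$. The reasoning is that $H$ contains no windows (windows are non-edges of $\Gamma$), so every window of $G_N^{(i)}$ is necessarily ``substituted'' by interior edges of $H$; by the substitution identity from Section 3, the combined weight of the substituting interior edges equals the combined weight of the substituted contour edges plus windows, so Weight$_i(H) = L_i + S_i$.

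Independently, because $H$ is a cycle visiting every node exactly once, every node is incident to exactly two $H$-edges, so Weight$_i(H) = 2\sum_v w_i(v)$. The same identity, applied to the contour cycle (the ``fictitious Hamiltonian cycle''), reproduces $L_i+S_i = 2\sum_v w_i(v)$, confirming the invariant without relying on $H$. To upgrade $L_1+S_1=L_2+S_2$ into the two separate identities $L_1=L_2$ and $S_1=S_2$, I would use the structural constraints of the weight assignment in Subsection 3.2: the nodes incident to common contour edges receive identical weights in the two objects, the window nodes receive $\pm 0.5$, and the remaining nodes are forced by the zero-interior condition. These constraints decouple the window contribution from the edge contribution: windows are ``pinned'' to the $\pm 0.5$ node weights and hence their sum $S_i$ is determined by the window-node pattern (which is an invariant of $\Gamma$ once both objects are fixed), while the contour-edge sum $L_i$ depends on the rest of the weight assignment, which is likewise forced on both sides by the common-edge agreement.

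For the sufficiency direction ($L_1=L_2, S_1=S_2 \Rightarrow$ Hamiltonian) I would argue by contradiction. Assume $\Gamma$ has no Hamiltonian cycle. I would track what goes wrong when one tries to ``build'' a Hamiltonian cycle by taking the contour of $G_N^{(1)}$, substituting its windows using interior edges, and then performing the analogous operation in $G_N^{(2)}$. Absence of a true cycle forces an imbalance: some node ends up with odd degree, or some subset of nodes cannot be closed off, and this imbalance must manifest as a mismatch in either the contour-edge total or the window total. I would argue that an obstruction that involves windows (non-graph edges) shows up as $S_1 \neq S_2$, whereas an obstruction that involves a disagreement on real graph edges shows up as $L_1 \neq L_2$. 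The correction algorithm is invoked only to restore the zero-interior condition and preserves these parameters by construction, so it does not mask the obstruction.

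The main obstacle is clearly the sufficiency direction. Necessity reduces to a clean bookkeeping argument using the invariant Weight$(H)=L_i+S_i$ and the shared-weight structure across the two objects; the substitution identity of Section 3 does essentially all of the work. Sufficiency, by contrast, asserts that the pair $(L_i,S_i)$ is not just necessary but \emph{complete} as a Hamiltonicity certificate, which is a far stronger claim. The delicate point is that neither object individually distinguishes Hamiltonian from non-Hamiltonian graphs (this is acknowledged in the introduction), so the proof must exploit the joint constraint — in particular, that the two sets of interior edges are disjoint while the common contour edges carry identical weights — to rule out any non-Hamiltonian graph that happens to yield matching parameter sums. I would expect verifying this on small non-Hamiltonian cubic graphs (e.g., a cubic graph with a bridge) to be the first sanity check before attempting the general argument.
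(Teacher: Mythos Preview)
Your sufficiency direction has a genuine gap, and your overall approach diverges from the paper's in a way you should be aware of.

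On sufficiency: what you offer is not an argument but a plan to look for one. Saying that a non-Hamiltonian obstruction ``must manifest as a mismatch in either the contour-edge total or the window total'' supplies no mechanism linking a topological obstruction to Hamiltonicity with a numerical discrepancy in $(L_i,S_i)$. You flag this yourself as the main obstacle and propose sanity-checking small cubic graphs first --- that is an acknowledgment that the key idea is not yet in hand, not a proof sketch.

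The paper's proof rests on machinery you never invoke. It forms two \emph{union objects}: each retains the structure (contour, windows, interior) of one of the basic objects but carries on every edge the \emph{sum} of that edge's weights from both basic objects, so interior edges of the union objects pick up weights in $\{-1,0,+1\}$. It then defines \emph{equivalent transformations}: type~1 cancels a $+1$ interior weight against a $-1$ interior weight without changing the contour length; type~2 removes a residual nonzero interior weight at the cost of shifting the contour length, but by the same amount in both union objects simultaneously. Sufficiency is argued as follows: if $L_1=L_2$ and $S_1=S_2$, the interior-weight sums of the two union objects coincide, so after type-1 cancellations the same number $m$ of positive interior edges remains in each; a type-2 transformation zeroes these while shifting both contours by $-2m$, leaving both union objects with all-zero interior weights and hence $H_1 = L_1+S_1 = L_2+S_2 = H_2$, which the paper states ``confirms the existence of a Hamiltonian cycle.'' Necessity (part~(4)) is the observation that when $L_1 \neq L_2$ the residual interior weights after type-1 cancellation differ between the two union objects, so no common type-2 step can zero both. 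Your necessity sketch via $\mathrm{Weight}_i(H)=2\sum_v w_i(v)=L_i+S_i$ is tidier than this, but be careful: concluding $L_1+S_1=L_2+S_2$ from it requires $\sum_v w_1(v)=\sum_v w_2(v)$, and only the nodes incident to common contour edges are guaranteed identical weights across the two objects by the construction in Subsection~3.2.
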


\begin{proof}\begin{enumerate}\item[(1)] On transformed objects:

Consider basic objects that we subject to testing. Suppose that we
construct two basic objects. They do not intersect on interior
edges, or on uncommon contour edges. Suppose that the weight
assignment algorithm is completed.

We next take the union of both basic objects. The interpretation
of the union of one basic object with a second basic object is
that we receive the first basic object, where the weight for each
edge is the sum of weights of that edge in each basic object.
Thus, by finding the union of the first basic object with the
second, and also finding the union of the second basic object with
the first, we form two new basic objects that have the structure
of the two original basic objects, and equal weights. The interior
edges of the first union object will consist of the uncommon
contour edges of the second basic object, and vice versa. If, in
the first and second basic objects, the sum of contour weights was
the same ($L_1 = L_2$) and the sum of window weights was the same
($S_1 = S_2$), then in the union objects, the weight of the
contour will increase by the sum of weights of the common contour
edges, which is equal in both objects. The weight of the interior
edges for each union object will be equal to the sum of weights of
uncommon contour edges in the alternate basic object. If $L_1 =
L_2$, this implies that the sum of weights of interior edges in
both union objects will be the same. Thus, if in the first two
basic objects, $L_1 = L_2$, and $S_1 = S_2$, then likewise the sum
of weights of contour edges, and windows, will be the same in the
union objects as well. Using analogous arguments, it can be seen
that if $L_1 \neq L_2$, or $S_1 \neq S_2$, these inequalities are
preserved in the union objects as well.

We note that if we were required to perform the correction
algorithm, that led to $L_1 = L_2$ and $S_1 = S_2$, then these
equalities will be preserved for the union objects as well. This
is because the sum of the weights of interior edges is equal to 0,
and therefore will not change the sum of weights of the contour
edges in the union objects. If we now perform {\em equivalent
transformations}, which are described below, then the weight of
interior edges in the union objects will become 0 by means of the
transformations of the first kind listed below, that do not lead
to the violation of the equalities. If $L_1 \neq L_2$, or $S_1
\neq S_2$, then this inequalities will also be preserved in the
union objects after the correction algorithm is performed.

Suppose that in the first object the length of the Hamiltonian
cycle was $H_1$, and in the second it was $H_2$. Then in the union
objects, it will be equal to $H_1 + H_2$. The interior edges of
the union objects will have weights -1, 0 or +1.

\item[(2)] On equivalent transformations.

Equivalent transformations are performed with the aim to confirm
the equality of the length of Hamiltonian cycles in the
constructed basic objects. These lengths should be such that if
interior edges have zero weights, then the length of a Hamiltonian
cycle in each basic object should be equal to the length of its
contour. Since the interior edges in the constructed basic objects
have weights -1, 0 and +1, an equivalent transformation would
consist of consecutive annihilation of weights of -1 and +1 of
interior edges. These equivalent transformations can be divided
into two groups:
\begin{enumerate}\item The first group consists of the
transformations that, while annihilating the weight of an edge,
does not change the length of the contour, and therefore does not
change the length of a Hamiltonian cycle. \item The second group
consists of the transformations that do change the length of the
contour while annihilating the weight of an edge, and consequently
change the length of a Hamiltonian cycle by the same value
simultaneously in two basic objects.\end{enumerate}

Consider an equivalent transformation of the first type. Suppose
we can select $k$ interior edges of weight +1 and $k$ interior
edges of weight -1. We change the weight of $k$ interior edges of
weight +1 by using weight -1 that is assigned to nodes that are
incident to these interior edges, excluding window nodes.
Analogously, we annihilate the weight of $k$ interior edges of
weight -1 using weight +1 that is assigned to nodes that are
incident to these interior edges, excluding window nodes. This
allows us to annihilate the weight of $2k$ interior edges.

Consider a transformation of the second type. This transformation
is equivalent if, in both basic objects, there appears a weight of
the same sign. Suppose that $H_1 = H_2$ ($L_1 = L_2$, $S_1 =
S_2$). Then, the entire annihilation of the weights of interior
edges requires adding the same number of weights of the same sign
in each basic object that leads to equal, simultaneous, change in
the length of a Hamiltonian cycle, by assigning zero weights to
interior edges of both basic objects. As in the previous case, the
weights are added to nodes that do not form windows. The number of
added weights depends on the number of interior edges of one basic
object with positive weight ($r+$), and the number of interior
edges in the same basic object with negative weight ($r-$).

\begin{remark}It is possible to identify other equivalent
transformations of the second type, using window nodes, however
all of them change the length of contour, and therefore a
Hamiltonian cycle by the same value.\end{remark}

\item[(3)] On Hamiltonian graphs.

Suppose that in two union objects, the sum of contour weights are
equal ($L_1 = L_2$) and the sum of window weights are equal ($S_1
= S_2$), that was also true in the original basic objects. It is
known that if the weights of the interior edges are all zero,
then:
\begin{enumerate} \item In the construction of cycles, we
will omit some of the contour edges, and all of the window edges.
\item We substitute these edges with some interior edges. \item
The sum of the weights of contour edges and windows that are
omitted is equal to the sum of the weights of the interior edges
they are substituted with.\end{enumerate}

Since the sum of the weights of interior edges that we use for the
substitution equals zero (because all interior edges have zero
weight), then the length of any possible Hamiltonian cycle is
equal to the length of the contour ($H_1 = L_1 + S_1$, $H_2 = L_2
+ S_2$).

The remainder of the proof will show, by means of equivalent
transformations, that two union objects will take a form that will
allow us to determine the length of any Hamiltonian cycle in these
forms. This will be done by ensuring that the interior edges in
both union objects have zero weight.

We perform equivalent transformations in two union objects.
Because the sum of weights of the interior edges in both union
objects are equal to each other, then the composition of the set
of interior edges in these objects is the following:
\begin{enumerate}\item[(1)] Without loss of generality, assume that the
weight of the interior is nonnegative. In the first union object,
there are $\mu_1$ edges with negative weight, and at least $\mu_1$
edges with positive weight. After equivalent transformations of
type 1, we denote the number of remaining interior edges of
positive weight as $m \geq 0 $. \item[(2)] Using analogous
arguments to in (1), and the fact that the interior edges in both
union objects have equal weight, then after equivalent
transformations of type 1, the number of remaining interior edges
of positive weight is $m$ in the second union object as well.
\end{enumerate}

We can alter the weight of these $m$ edges in both union objects
using equivalent transformations of the second type, to reduce
their weight to zero. These equivalent transformations will change
the weight of the contour in both union objects by $-2m$, and
therefore the weights of the contours will remain equal to each
other. Thus, is in the basic objects $L_1 = L_2$ and $S_1 = S_2$,
then the equivalent transformations to the form where $H_1 = L_1 +
S_1$ and $H_2 = L_2 + S_2$, and therefore $H_1 = H_2$, which
confirms the existence of a Hamiltonian cycle.

\item[(4)] On non-Hamiltonian graphs.

Suppose that in two union objects, $L_1 \neq L_2$ and $S_1 \neq
S_2$. Without loss of generality, assume that the sum of weights
of the interior edges in the first union object is nonnegative.
Then, after we perform equivalent transformations of type 1, there
will remain $m \geq 0$ interior edges of positive weight in the
first union object. Then, in the second union object, there will
remain $k$ interior edges with nonzero weight, and the sum of
weights of the interior edges in each union object after these
equivalent transformations of type 1 is different.

At this stage, it is not possible to reduce the weights of all
interior edges in both union objects to zero by means of
equivalent transformations of type 2. Since the existence of a
Hamiltonian cycle implies that the length of any Hamiltonian cycle
should be equal in both union objects, this proves that graph is
non-Hamiltonian. This concludes the
proof.\end{enumerate}\end{proof}

Finally, we check the Hamiltonicity for the graphs displayed in
Figures 3.1, 3.5, 3.8 and 3.11.

In Figure 3.1, the two basic objects with the completed algorithm
of assignment of weights are displayed (see Example 4). It was
suggested that node $6$ in the first basic object (see Figure 3.1)
could be assigned weight -0.5 (or +0.5). The nodes 1 and 5 are
assigned weights +0.5 (or -0.5). Since we are trying to solve HCP,
which is determined by equating $H_1$ and $H_2$, the choice of
weights +0.5 or -0.5 is entirely determined by the condition $H_1
= H_2$. If node $6$ is assigned the weight +0.5 in the first basic
object, it implies that $H_1 = -5$ ($L_1 = -7$, $S_1 = 2$), and
for the second basic object $H_2 = -5$ ($L_2 = -7$, $S_2 = 2$).
Since $H_1 = H_2$, the graph, according to the main theorem, is
Hamiltonian.

In Figure 3.5, the two basic objects with the completed algorithm
of assignment of weights are displayed (see Example 5). As a
result of this algorithm, all edges and windows of the objects are
assigned weights -1, 0 or +1. We determine the sum of weights of
the contours of each object, which is supposed to be the length of
a Hamiltonian cycle. We obtain $H_1 = -2$ ($L_1 = -3$, $S_1 = 1$),
and $H_2 = -4$ ($L_2 = -6$, $S_2 = 2$). Because $H_1 \neq H_2$,
$(L_1 \neq L_2$, $S_1 \neq S_2$), then the graph, according to the
main theorem, is non-Hamiltonian.

In Figure 3.8, the two basic objects with the completed procedure
of assignment of weights are displayed (see Example 6). As a
result of this procedure, all edges and windows of the objects are
assigned weights -1, 0 or +1. We determine the sum of weights of
the contours of each object, which is supposed to be the length of
a Hamiltonian cycle. We obtain $H_1 = -1$ ($L_1 = -2$, $S_1 = 1$),
and $H_2 = -3$ ($L_2 = -5$, $S_2 = 2$). Because $H_1 \neq H_2$
($L_1 \neq L_2$, $S_1 \neq S_2$), then the graph, according to the
main theorem, is non-Hamiltonian.

In Figure 3.11, the two basic objects with the completed procedure
of assignment of weights, after the correction procedure is also
applied, are displayed (see Example 7). As a result of these
algoritms, all edges and windows of the objects are assigned
weights -1, 0 or +1. We determine the sum of weights of the
contours of each object, which is supposed to be the length of a
Hamiltonian cycle. We obtain $H_1 = -2$ ($L_1 = -3$, $S_1 = 1$),
and $H_2 = -2$ ($L_2 = -3$, $S_2 = 1$). Because $H_1 = H_2$ ($L_1
= L_2$, $S_1 = S_2$), then the graph, according to the main
theorem, is Hamiltonian.

\section{The general solution of the HCP with complexity estimate}

The solution of the HCP consists of several consecutive
subproblems:
\begin{enumerate}
\item[(1)] Construction of a basic object:
\begin{enumerate}
\item Construction of an object whose windows are not linked by
interior edges. \item Introduction of additional windows.
\end{enumerate}
\item[(2)] Construction of the second object:
\begin{enumerate}
\item Preliminary formation of the second object. \item
Construction of an object whose windows are not linked by interior
edges. \item Introduction of additional windows.
\end{enumerate}
\item[(3)] Selection of common edges of the contours of both
objects, including assignment of weights to the nodes of both
objects (and therefore to the edges of both object). \item[(4)]
Determination of parameters of both objects, and comparison of
these parameters to determine the existence of a Hamiltonian cycle
in the graph.\end{enumerate}

We will now estimate the complexity of each of the above
subproblems that are equivalent to the HCP.
\begin{enumerate}\item[(1)] Construction of a basic object:
\begin{enumerate}\item Construction of an object whose windows are not linked by
interior edges - this subproblem consists of separate individual
algorithms that eliminate links between windows. The order in
which these individual algorithms are performed is determined by
the configuration of the windows and the presence of degenerate
segments containing nodes whose degree $d = 3$. Each of these
procedures moves an interior edge linking window nodes to the
contour, and moves one of the contour edges to the interior. The
initial number of windows (some of which may violate the required
properties) is less than or equal to $N$. Each procedure can be
performed by a single search of all nodes, and no corrections (or
doubling back) are required. This proves that the complexity of
this subproblem is polynomial, because the number of algorithms
can not be more than $N$. \item Introduction of additional windows
-- this subproblem consists of separate individual algorithms that
introduce additional windows. The order in which these individual
algorithms are performed is determined by the configuration of
windows, and by the presence of free edges. Each of these
algorithms moves two of the contour edges to the interior, and one
free edge to the contour, creating an additional window. The
initial number of windows after the previous step is less than or
equal to $[N/6]$. During this algorithm, two contour edges are
substituted with one free edge, and with an additional window.
This substitution can be performed by a single search of all free
edges of the object, and no corrections (or doubling back) is
required. This proves that the complexity of this subproblem is
polynomial, because the number of algorithms can not be more
$[N/6]$.
\end{enumerate}
\item[(2)] Construction of the second object:
\begin{enumerate}
\item Preliminary formation of the second object -- this
subproblem consists of separate individual algorithms that
construct the initial formation of the second object. These
algorithms are the construction of the initial contour, the
determination of nodes that should not form windows, and temporary
elimination of these nodes. The order in which these individual
algorithms are performed is determined by the configuration of the
first basic object, i.e., the number its windows, the position of
the windows on the contour, the presence of nodes of the degree $d
= 2$, presence of degenerate segments, and the presence of free
edges. Once these algorithms have been completed, the final
algorithm is the temporary elimination of interior edges that
cannot belong to the contour of the second object, and for which
no further algorithms apply.
\begin{enumerate}
\item Construction of the initial contour -- this algorithm
consists of a sequence of rearrangements of the first basic object
by moving the interior edges to the contour, and rearranging some
nodes of degree $d = 2$. The number of such nodes is less than
$N$. \item Determination and elimination of nodes that should not
form windows -- this algorithm identifies nodes that are incident
to edges that are linked to window nodes of degree $d = 2$. The
algorithm of eliminating these nodes also involves the elimination
of links between windows. The number of nodes that can not form
windows of the second object is determined by the number of nodes
of degree $d = 2$, which is less than $N$. \item Elimination of
interior edges that cannot belong to the contour -- this algorithm
identifies interior edges and temporarily deletes them from the
object. The number of interior edges that are deleted is dependent
on the number of windows in the first basic object, and the number
of nodes with degree $d = 2$ in the second object.
\end{enumerate}

Hence, since each of the individual algorithms that construct the
preliminary form of the second object is performed less than $N$
times, each performed by a single search on the nodes of the
contour, and no corrections (or doubling back) are required, the
complexity of this subproblem is polynomial.

Elimination of interior edges that cannot belong to the contour
prohibits the movement of certain contour edges to the interior.
The subproblems 2(b) and 2(c) can be performed as described as in
1(a) and 1(b).
\end{enumerate}
\item[(3)] Selection of common edges and assignment of weights to
the edges of the object. This subproblem consists of individual
algorithms that:
\begin{enumerate}
\item Identify contour edges that are common for both basic
objects. \item Assigns equal weights, in both basic objects, to
nodes incident to common edges. \item Assigns zero weights to all
interior edges in either basic object. If this is impossible,
apply the correction algorithm to ensure that the sum of weights
of the interior edges is zero.

The algorithm of identifying common contour edges consists of
comparing contour edges of both objects, and is performed by a
single search of the contour edges of both objects.

The weights of the nodes that are incident to the common edges
should be the same. This is achieved through the assigning of
weights to the subgraph that is common to both objects. This will
ensure that the weight of every common contour edge will be the
same in both objects. The assignment of weights to the edges that
link nodes incident to common contour edges is achieved by means
of islands that are identified in the subgraph. In those cases
when the weight of an interior edge is nonzero, then we apply the
correction algorithm that makes the sum of the weight of interior
edges equal to zero. As a result of the weight assignment, the sum
of weights of the interior edges is zero. This is achieved by
assigning the weights +0.5 (or -0.5) to a node $a$, and if there
is an interior edge $e(a-b)$, then the weight -0.5 (or +0.5)
should be assigned to node $b$.

Therefore, since all algorithms of this subproblem are performed
no more than $N$ times, the complexity of this subproblem is
polynomial.
\end{enumerate}
\item[(4)] Determination of parameters of both objects -- the
parameters that determine the Hamiltonicity of the graph are sums
of weights of edges, and the sums of weights of windows of both
basic objects. If for both basic objects, these parameters are
equal, the graph is Hamiltonian. These parameters are determined
by adding the weights of less than $N$ contour edges, and the
weights of no more than $[N/6]$ windows, in each basic object.
Therefore, the complexity of computing these parameters is
polynomial.
\end{enumerate}

Therefore, the complexity of performing all of the above
subproblems is polynomial.

\section{Conclusion}

The described solution of the HCP based on the construction of two
basic objects for a given graph can be considered as surprising.
The HCP for graphs of degree $d \leq 3$ is NP-complete, but it is
just a particular case of HCP for general non-oriented graphs.
However, its solution provides a hope for the solution of the
general $P = NP$. The question of construction of basic objects
for other NP-complete problems remains open, and further hard work
is required to prove the existence of such objects.

It is worth mentioning that these results on the existence of
Hamiltonian cycles identify an algorithm to construct the cycle,
if it exists. A separate article will be devoted to the
construction of this algorithm.

\section*{Acknowledgments}

The author is grateful to V. Ejov (School of Mathematics and
Statistics, UniSA), and M. Haythorpe (School of Mathematics and
Statistics, UniSA) for useful discussions and questions.

\bibliographystyle{unsrtnumbered}


\begin{thebibliography}{9}
\bibitem{1}  M.~R.~Garey and D.~S. Johnson. {\em Computers and
Intractability: A Guide to the Theory of NP-Completeness} (W.~H.
Freeman \& Co., New York, 1979).
\bibitem{2}  I.~I.~Goray and L.~I.~Goray,
{\em Computer system configured in supporter of solving
NP-complete problems at high speed} (United States Patent No. 5,
535,840 B1, International Intellectual Group, Inc., 2003).
\bibitem{3} G.~Gutin and A.~P.~Punnen (eds), {\em The Traveling
Salesman Problem and Its Variations} (Kluwer Academic Publishers,
Boston, 2002).
\bibitem{4}  D.~S.~Hochbaum (ed), {\em Approximation algorithms
for NP-hard problems} (PWS Publ. Co., Boston, 1997).
\bibitem{5}  S.~Arora and M.~Sudan, "Improved Low-Degree Testing and its Applications",
{\em ACM STOC} (1997)
http://www.cs.princeton.edu/$\sim$arora/pubs/ldtest.ps.
\bibitem{6} E.~Balas and N.~Christofides, "A restricted
lagrangean approach to the traveling salesman problem", {\em Math.
Progr.} {\bf. 21}~(1) (1981) 19--46.
\bibitem{7} M.~Held and R.~M. Karp, "Traveling Salesman
Problem and Minimum Spanning Trees: Part II", {\em Math. Progr.},
{\bf 1} (1971) 6--25.
\bibitem{8} E. L.
Lawler, D. E. Wood, "Branch-and-Bound Methods: A Survey", {\em OR}
{\bf 14} (1966) 699--719.
\bibitem{9} Website at: http://www.pcgrate.com/download.

\end{thebibliography}

\end{document}